\newtheorem{lemma}{Lemma}[section]
\newtheorem{proposition}[lemma]{Proposition}
\newtheorem{theorem}[lemma]{Theorem}
\newtheorem{corollary}[lemma]{Corollary}
\newtheorem{lemmas}{Lemma}[subsection]
\newtheorem{rem}[lemma]{Remark}
\newtheorem{prop}[lemma]{Proposition}
\newtheorem{cor}[lemma]{Corollary}
\theoremstyle{definition}
\newtheorem{definitions}[lemmas]{Definition}
\theoremstyle{remark}
\newcommand{\rto}{\rightarrow}
\newcommand{\lrto}{\longrightarrow}
\newcommand{\Hom}{\operatorname{Hom}}
\newcommand{\im}{\operatorname{im}}
\newcommand{\End}{\operatorname{End}}
\newcommand{\mbbS}{\mathbb{S}}
\numberwithin{equation}{section}
\begin{document}

\title{A representation theoretic study of Noncommutative Symmetric Algebras}

\author{D. Chan}
\address{University of New South Wales}
\email{danielc@unsw.edu.au}

\author{A. Nyman}
\address{Western Washington University}
\email{adam.nyman@wwu.edu}
\keywords{}
\thanks{2010 {\it Mathematics Subject Classification. } Primary 14A22, 16S38; Secondary 16E35}

\begin{abstract}
We study Van den Bergh's noncommutative symmetric algebra $\mbbS^{nc}(M)$ (over division rings) via Minamoto's theory of Fano algebras. In particular, we show  $\mbbS^{nc}(M)$ is coherent, and its proj category $\mathbb{P}^{nc}(M)$ is derived equivalent to the corresponding bimodule species. This generalizes the main theorem of \cite{minamoto}, which in turn is a generalization of Beilinson's derived equivalence. As corollaries, we show that $\mathbb{P}^{nc}(M)$ is hereditary and there is a structure theorem for sheaves on $\mathbb{P}^{nc}(M)$ analogous to that for $\mathbb{P}^1$.
\end{abstract}

\maketitle

\pagenumbering{arabic}

\section{Introduction}

The symmetric algebra $\mathbb{S}(V)$ on a finite dimensional vector space $V$ is a fundamental object in algebra that can be used to study the projective space $\mathbb{P}(V)$. Replacing the vector space $V$ with a fairly general finite bimodule over a pair of division rings (see Section \ref{section.ncsym} for precise conditions), one can form the noncommutative symmetric algebra $\mathbb{S}^{nc}(M)$ as defined by Van den Bergh \cite{vandenbergh}. When $M$ is 2-dimensional on the left and right, we studied the noncommutative symmetric algebra via classical techniques in noncommutative algebraic geometry in \cite{newchan}. In this case, its associated proj category $\mathbb{P}^{nc}(M)$ behaves much like $\mathbb{P}^1$. Indeed, $\mathbb{S}^{nc}(M)$ is noetherian and coherent sheaves on $\mathbb{P}^{nc}(M)$ are direct sums of their torsion part and line bundles.


In this note, we study the noncommutative symmetric algebra for higher dimensional $M$ extending the results of \cite{abstractp1}. The resulting algebra diverges sharply from the classical symmetric algebra and is in fact non-noetherian. For example, when $M$ is an $n$-dimensional vector space over a field $k$, then $\mathbb{S}^{nc}(M)$ is the $\mathbb{Z}$-indexed incarnation of the graded algebra $k[x_1,\ldots,x_n]/(\Sigma x_i^2)$ and its proj category behaves more like a projective line which Piontkovski dubs the $n$th projective line $\mathbb{P}^{1}_{n}$. Furthermore, it has been observed by Minamoto \cite{minamoto}, Piontkovski \cite{piont} and Van den Bergh that $\mathbb{P}^{1}_{n}$ is derived equivalent to the finite dimensional algebra
$\left( \begin{smallmatrix}
         k & M \\ 0 & k
        \end{smallmatrix}\right)$,
a result generalizing Beilinson's classic derived equivalence for $\mathbb{P}^1$. These results suggest that a more fruitful way to study noncommutative symmetric algebras is to first prove a version of Beilinson's derived equivalence in this context, and then extract desirable properties of $\mbbS^{nc}(M)$ as byproducts of the representation theory of (not necessarily finite-dimensional) bimodule species. The purpose of this note is to pursue this line of thought and hence show that $\mbbS^{nc}(M)$ is coherent, $\mathbb{P}^{nc}(M)$ is hereditary and there is a Grothendieck splitting theorem. This recovers many of the results of \cite{newchan}, in a more general context, by much simpler means. Thus, though the new representation-theoretic results here are quite modest, the implications for the noncommutative symmetric algebra are rather significant.

Finally, we remark that Van den Bergh's original motivation for introducing the noncommutative symmetric algebra was to study noncommutative ruled surfaces such as the 2-generator 3-dimensional Sklyanin algebras, where the most interesting cases occur when the corresponding bimodule species is not finite dimensional. We hope this paper will illuminate the study of noncommutative ruled surfaces.

\section{Noncommutative symmetric algebras} \label{section.ncsym}
Let $D_0$ and $D_1$ be division rings.  In this section, following \cite{vandenbergh}, we define the noncommutative symmetric algebra of certain $D_0-D_1$-bimodules.

\subsection{Bimodules}
Let $M$ be a $D_0-D_1$-bimodule. The {\it right dual of $M$}, denoted $M^{*}$, is the $D_1-D_0$-bimodule
$\operatorname{Hom}_{D_1}(M_{D_1},D_1)$, whilst the {\it left dual of $M$}, denoted ${}^{*}M$, is the $D_1-D_0$-bimodule $\operatorname{Hom}_{D_0}({}_{D_0}M,D_0)$



We need to iterate these duals and so introduce the following notation.
$$
M^{i*}:=
\begin{cases}
M & \text{if $i=0$}, \\
(M^{i-1*})^{*} & \text{ if $i>0$}, \\
{}^{*}(M^{i+1*}) & \text{ if $i<0$}.
\end{cases}
$$
As in \cite{dlab}, we need to impose a condition on the bimodule to ensure it is well behaved (see Section~\ref{sec:species} for why this is so).
\begin{definitions}
We say that $M$ has {\em symmetric duals} if $M, M^*$ are finite dimensional on the left and right, and there is a bimodule isomorphism $M \cong M^{**}$.
\end{definitions}
In this case, all the $M^{i*}$ are finite dimensional on both sides and ${}^*M \simeq M^*$, hence the terminology.
If $M$ has finite left-dimension $m$ and finite right-dimension $n$, we say $M$ has {\it left-right dimension }$(m,n)$.  The next proposition gives some instances of when bimodules have symmetric duals.

\begin{prop}
Suppose $M$ has left-right-dimension $(m,n)$.  Then $M$ has symmetric duals if
\begin{enumerate}
\item{} $D_{0}$ and $D_{1}$ are finite-dimensional over $k$ and $\operatorname{char }k$ does not divide either $[D_{0}:k]$ or $[D_{1}:k]$,

\item{} $D_{1}$ is a commutative subring of $D_{0}$ such that $[D_{0}:D_{1}]=m < \infty$, $\operatorname{char }k$ does not divide $m$, and $M={}_{D_{0}}{D_{0}}_{D_{1}}$, or

\item{} $D_{0}$ and $D_{1}$ are commutative, $M$ is simple of left- and right-dimension $(m,n)$, and the characteristic of $k$ does not divide $m$ or $n$.
\end{enumerate}
\end{prop}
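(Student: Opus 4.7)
The plan is to handle each case by exhibiting a non-degenerate trace form and using it to realize $M^*$ concretely; iterating then produces the identification $M \cong M^{**}$.

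For (1), for each $i \in \{0,1\}$ the regular $k$-linear trace $t_i : D_i \to k$ (the trace of left multiplication on $D_i$) is non-degenerate under the characteristic hypothesis, because for any non-zero $d \in D_i$ we have $t_i(d \cdot d^{-1}) = t_i(1) = [D_i:k]$, which is non-zero in $k$. Post-composition with $t_1$ then yields a bimodule isomorphism $M^* = \operatorname{Hom}_{D_1}(M, D_1) \xrightarrow{\sim} \operatorname{Hom}_k(M, k)$, where compatibility with both the $D_0$ and $D_1$ actions follows from the tracial symmetry $t_1(ab) = t_1(ba)$. Applying the same construction with $t_0$ to $M^*$ and then invoking the canonical $k$-linear double-dual, one obtains $M^{**} \cong M$ as bimodules; finite-dimensionality on both sides is manifest from $\operatorname{Hom}_k(-,k)$.

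For (2), assuming as usual that $D_1 \subseteq Z(D_0)$, the regular trace $\operatorname{tr}_{D_0/D_1} : D_0 \to D_1$, defined as the trace of left multiplication viewed as a right-$D_1$-linear endomorphism of $D_0$, is non-degenerate by the same argument: $\operatorname{tr}(1) = m$ is a unit of $D_1$ by hypothesis. The pairing $(x,d) \mapsto \operatorname{tr}(xd)$ identifies $M = {}_{D_0}(D_0)_{D_1}$ with $M^*$ (with appropriately swapped sides), and another iteration recovers $M^{**} \cong M$.

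For (3), since $D_0$ and $D_1$ are commutative fields containing the common subfield $k$, a simple $D_0$-$D_1$-bimodule is a simple $D_0 \otimes_k D_1$-module and hence a field $F = (D_0 \otimes_k D_1)/\mathfrak{m}$ for some maximal ideal; the dimension hypothesis gives $[F:D_0] = m$ and $[F:D_1] = n$. The divisibility conditions force both $F/D_0$ and $F/D_1$ to be separable, because a purely inseparable piece of a finite tower contributes a power of $\operatorname{char} k$ to the degree, contradicting $\operatorname{char} k \nmid m, n$. Hence both field-theoretic trace forms $\operatorname{tr}_{F/D_0}$ and $\operatorname{tr}_{F/D_1}$ are non-degenerate, giving $M^* \cong F$ (with sides swapped) and thus $M^{**} \cong M$ after iterating.

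The main obstacle in all three cases is verifying that the trace-induced identification is a \emph{bi}module isomorphism on both sides simultaneously, not merely a one-sided isomorphism of $k$-vector spaces; this rests crucially on the cyclic symmetry $\operatorname{tr}(ab) = \operatorname{tr}(ba)$ of the regular trace. The characteristic hypotheses enter only to ensure $\operatorname{tr}(1) \neq 0$, which in a division ring immediately yields non-degeneracy by the computation $\operatorname{tr}(d \cdot d^{-1}) = \operatorname{tr}(1)$.
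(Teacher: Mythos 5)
Your treatment of (1) is essentially the paper's own argument: the paper also identifies $M^*$ and ${}^*M$ with $\operatorname{Hom}_k(M,k)$ by post-composing with $\operatorname{tr}_{D_1/k}$, resp.\ $\operatorname{tr}_{D_0/k}$, and your nondegeneracy check via $\operatorname{tr}(dd^{-1})=\operatorname{tr}(1)\neq 0$ is fine. For (3) the paper simply defers to the proof of \cite[Lemma 3.2]{newchan}; your self-contained route (realize the simple bimodule as a field $F=(D_0\otimes_k D_1)/\mathfrak{m}$, deduce separability of $F/D_0$ and $F/D_1$ from $\operatorname{char}k\nmid m,n$, and use the nondegenerate field traces) is correct, granting the $k$-centrality of bimodules that the statement itself tacitly assumes.

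The genuine gap is in (2). You insert the hypothesis ``$D_1\subseteq Z(D_0)$'' which is not in the statement: only that $D_1$ is a commutative subring (hence subfield) of $D_0$, and the intended examples are typically non-central, e.g.\ a maximal subfield of a quaternion division algebra. Without centrality your trace pairing does not even produce elements of $M^*=\operatorname{Hom}_{D_1}(M_{D_1},D_1)$: writing $\operatorname{tr}=\operatorname{tr}_{D_0/D_1}$ for the trace of left multiplication on the free right $D_1$-module $D_0$, the map $v\mapsto\operatorname{tr}(xv)$ need not be right $D_1$-linear, because for non-central $d_1\in D_1$ left multiplication by $d_1$ is not scalar multiplication in the right $D_1$-module structure, so $\operatorname{tr}(yd_1)\neq\operatorname{tr}(y)d_1$ in general. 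Concretely, for $D_0=\mathbb{H}$ and $D_1=\mathbb{C}=\mathbb{R}\oplus\mathbb{R}i$ (right basis $\{1,j\}$) one computes $\operatorname{tr}(1\cdot i)=\operatorname{tr}(i)=0$ while $\operatorname{tr}(1)\cdot i=2i$. The cyclic symmetry $\operatorname{tr}(ab)=\operatorname{tr}(ba)$ that you lean on is true but does not repair this; what fails is $D_1$-bilinearity of the pairing, not its symmetry. So as written your argument for (2) establishes only the special case of a central subfield; the general case asserted in the proposition (which does hold --- in the quaternion example one can see $M\cong M^{**}$ via Skolem--Noether, since any two embeddings of $\mathbb{C}$ into $\mathbb{H}$ are conjugate) needs a different argument, which is exactly what the paper outsources to the proof of \cite[Lemma 3.2]{newchan}.
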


\begin{proof}
To prove the first result (which appeared in \cite{olddlab}), one shows that there are $D_{1}-D_{0}$-bimodule isomorphisms
$$
\operatorname{Hom}_{D_{1}}(M_{D_{1}},D_{1}) \rightarrow \operatorname{Hom}_{k}(M,k)
$$
and
$$
\operatorname{Hom}_{D_{0}}({}_{D_{0}}M, D_{0}) \rightarrow \operatorname{Hom}_{k}(M,k).
$$
The first one takes $\psi:M_{D_{1}} \rightarrow D_{1}$ to $\operatorname{tr}_{D_{1}/k} \circ \psi$, and the second is similar.

The proofs of the second and third results follow the proof of \cite[Lemma 3.2]{newchan}.
\end{proof}



\subsection{The definition of $\mathbb{S}^{nc}(M)$}

For $i \in \mathbb{Z}$, we let $D_{i} = D_{\bar{i}}$ where $\bar{i}$ is the residue class of $i$ modulo 2. In what follows, all unadorned tensor products will be over $D_{i}$, the context determining uniquely which $i$ is required.

We fix a $D_0-D_1$-bimodule $M$ with symmetric duals and left-right dimension $(m,n)$ satisfying $mn \geq 4$. For each $i$, the following pairs of functors have canonical adjoint structures:
\begin{equation} \label{eqn.adjointone}
(-\otimes_{D_{i}} M^{i*},-\otimes_{D_{i+1}} M^{i+1*}).
\end{equation}
In particular, adjunction gives a natural map $\eta_i \colon D_{i} \rto M^{i*}\otimes_{D_{i+1}} M^{i+1*}$ whose image we denote by $Q_{i}$. If $\{\phi_{1}, \ldots, \phi_{n}\}$ is a right basis for $M^{i*}$ and $\{\phi_{1}^{*}, \ldots, \phi_{n}^{*}\}$ is a corresponding dual left basis for $M^{i+1*}$, then $\eta_i(1) = \sum_{i}\phi_{i} \otimes \phi_{i}^{*}$.  In particular, the latter element is $D_{i}$-central.  We will employ this fact without comment in the sequel.

We briefly recall Van den Bergh's definition of a noncommutative symmetric algebra, in the context we need. For further details, the interested reader should refer to the original paper \cite{vandenbergh}, or look at the gentler treatment in \cite[Section~3]{abstractp1}. The {\it noncommutative symmetric algebra of $M$}, denoted $\mathbb{S}^{nc}(M)$, is the positive $\mathbb{Z}$-indexed algebra $\mbbS = \underset{i,j \in \mathbb{Z}}{\oplus}\mbbS_{ij}$ defined via generators and relations as follows.
\begin{itemize}
\item In degree 0 we set $\mbbS_{ii}=D_i$,
\item $\mbbS$ is generated (over $\oplus \mbbS_{ii})$ in degree one by $\mbbS_{ii+1}=M^{i*}$ (our convention for multiplication is that $\mbbS_{ij}\mbbS_{jk} \subseteq \mbbS_{ik}$).
\item The relations are generated in degree two by $Q_i \subset M^{i*}\otimes_{D_{i+1}} M^{i+1*}$.
\end{itemize}

\begin{rem}  \label{rem.2periodic}
Since we are assuming $M \cong M^{**}$, we have an isomorphism of indexed algebras $\mbbS^{nc}(M) \cong \mbbS^{nc}(M^{**})$, and in particular, $\mbbS^{nc}(M)_{ij}  =  \mbbS^{nc}(M^{**})_{ij} = \mbbS^{nc}(M)_{i+2,j+2}$. We say, consequently, that $\mbbS^{nc}(M)$ is {\em 2-periodic}.
\end{rem}

In what follows, we will often write $\mathbb{S}$ instead of $\mathbb{S}^{nc}(M)$, and, where no confusion will arise, we will write $Q$ instead of $Q_{i}$.  Finally, we will let $\varepsilon_{i} \in \mathbb{S}_{ii}$ denote the unit.

The above definition for $\mbbS$ makes perfect sense even when $mn < 4$. However, in this case, $\mbbS$ degenerates and we no longer have Euler exact sequences as per Theorem \ref{thm.adam} (see \cite{abstractp1} for further details).




\section{Canonical complexes for artinian rings} \label{sec:species}
In this section, we look at an analogue of the Serre functor for artinian hereditary rings $A$ which are not necessarily finite-dimensional algebras. The non-derived versions have been studied briefly in \cite{apr} and \cite{dlab}. The vast majority of the literature however, assumes finite-dimensionality.

When $A$ is a finite-dimensional hereditary $k$-algebra, the $k$-linear dual of $A$ is an $A$-bimodule which is injective on the right (and left) and contains all the simple modules.
\begin{definitions} \label{def.canonical}
Suppose $DA$ is a right injective $A$-module such that i) there is an isomorphism $\End_A((DA)_{A}) \cong A$, and ii) $DA$ contains all the simple modules of $A$. Then we say the complex of $A$-bimodules $\omega = (DA)[-1]$ is a {\em canonical complex for $A$}, and that $A$ {\em has a canonical complex}.
\end{definitions}
Unfortunately, the bimodule structure of $DA$ depends on the choice of isomorphism $A \cong \End_A ((DA)_A)$.

For applications to the noncommutative symmetric algebra $\mbbS^{nc}(M)$ associated to the $D_{0}-D_{1}$-bimodule $M$ with symmetric duals, we need Ringel's bimodule species $A_M = \begin{pmatrix} D_{0} & M \\ 0 & D_{1} \end{pmatrix}$. The case where $M$ is an $n$-dimensional vector space over a field $k$ corresponds to the path algebra of the $n$-Kronecker quiver. We let $e_{0}$ and $e_{1}$ denote the diagonal idempotents of $A$ corresponding to $D_0, D_1$. We will usually write right $A_M$-modules $N$ as row vectors $N = (Ne_0 \ Ne_1)$. Now, by \cite[III Proposition 2.1]{aus}, $A_M$  is an artinian ring, which is not usually a finite dimensional algebra. Furthermore, the Jacobson radical of $A_M$ is
$$
\operatorname{rad}\, A_M =
\begin{pmatrix}
0 & M \\ 0 & 0
\end{pmatrix} \simeq
(0 \ \ D_1)^{\dim_{D_1} M}
$$
This is projective so \cite[I Corollary 5.2]{aus} ensures that $A_M$ is hereditary.

We  introduce the following $A_M$-bimodule $DA_M$:  as a group, $DA_M = \begin{pmatrix} D_{0} & 0 \\ M^{*} & D_{1} \end{pmatrix}$, with left-action defined by
$$
\begin{pmatrix} \alpha & m \\ 0 & \beta \end{pmatrix} \cdot \begin{pmatrix} a & 0 \\ \delta & b \end{pmatrix} := \begin{pmatrix} \alpha a + m(\delta) & 0 \\ \beta \delta & \beta b \end{pmatrix}
$$
and right-action defined by
$$
\begin{pmatrix} a & 0 \\ \delta & b \end{pmatrix} \cdot \begin{pmatrix} \alpha & m \\ 0 & \beta \end{pmatrix} := \begin{pmatrix} a \alpha & 0 \\ \delta \alpha & \delta(m)+ b \beta \end{pmatrix}
$$
where we have used the identification $M \cong M^{**}$ in our definition of the first action.

\begin{lemma}  \label{lem.canonical}
The bimodule $DA_M$ is the injective hull of the semisimple right $A$-module $(D_0 \ \ 0) \oplus ( 0 \ \ D_1)$ and $\omega = (DA_M)[-1]$ is a canonical complex for $A_M$.
\end{lemma}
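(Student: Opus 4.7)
The plan is to verify the defining properties of an injective hull for $DA_M$ as a right $A_M$-module, and then to compute $\End_A((DA_M)_A)\cong A_M$. First, decompose $DA_M$ column-wise as $DA_M\cdot e_0\oplus DA_M\cdot e_1=(D_0\oplus M^*)\oplus D_1$ as abelian groups. A direct computation from the right-action formula shows that $m\in M\subset\operatorname{rad}A_M$ annihilates $DA_M\cdot e_1$ and sends $(a,\delta)\in DA_M\cdot e_0$ to $\delta(m)\in D_1\subseteq DA_M\cdot e_1$. Consequently the socle of $DA_M$ equals $D_0\oplus D_1=(D_0\ 0)\oplus(0\ D_1)=S_0\oplus S_1$, where $S_i$ denotes the simple at vertex $i$.

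Since $A_M$ is hereditary and artinian, injectivity of $DA_M$ reduces to showing $\operatorname{Ext}^1_{A_M}(S_i,DA_M)=0$ for $i=0,1$. This is immediate for $S_1\cong e_1A_M$, which is projective. For $S_0$, apply $\Hom(-,DA_M)$ to the projective resolution $0\to(0\ M)\to e_0A_M\to S_0\to 0$ (with $(0\ M)\cong S_1^{\dim_{D_1}M}$) and use the identifications $\Hom(e_0A_M,DA_M)=DA_M\cdot e_0=D_0\oplus M^*$ and $\Hom((0\ M),DA_M)\cong M^*$ (since any such map lands in the socle at vertex $1$, yielding a right $D_1$-module map $M\to D_1$). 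The induced restriction is $(a,\delta)\mapsto\delta$ and is surjective, so $\operatorname{Ext}^1(S_0,DA_M)=0$. Since $A_M$ is artinian, the finitely generated $DA_M$ is artinian with essential socle, which together with injectivity identifies $DA_M$ as the injective hull of $S_0\oplus S_1$. Since $S_0$ itself is injective ($\operatorname{Ext}^1(S_0,S_0)=0$), we get a splitting $DA_M=S_0\oplus E(S_1)$ with $E(S_1)=DA_M/S_0$.

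Computing the $2\times 2$ matrix of $\Hom$-spaces between these summands then gives $\End_A(DA_M)$: $\End(S_0)=D_0$ and $\End(E(S_1))=D_1$ (commutativity with the arrow action forces any endomorphism of $E(S_1)$ to be left multiplication by some $\beta\in D_1$ on both the $M^*$ and $D_1$ components); $\Hom(S_0,E(S_1))=0$ because $\operatorname{soc}E(S_1)=S_1\not\cong S_0$; and $\Hom(E(S_1),S_0)$ comprises the right $D_0$-module homomorphisms $M^*\to D_0$ (arrow-commutativity being vacuous as arrows act trivially on $S_0$), which as a $D_0$-$D_1$-bimodule is $M^{**}\cong M$ by the symmetric-duals hypothesis. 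These assemble into the upper-triangular matrix $\begin{pmatrix}D_0&M\\0&D_1\end{pmatrix}=A_M$. Tracing through the identifications confirms that the left-action formula used to define the bimodule $DA_M$, which invokes $M\cong M^{**}$, is precisely the one induced by this canonical ring isomorphism.

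The main obstacle is this endomorphism computation: identifying $\Hom(E(S_1),S_0)$ with $M$ through $M^{**}$ requires careful bookkeeping of left- and right-$D_i$-actions, and one must check that the paper's left-action formula is really the one transported from $A_M$ via the isomorphism. Once both properties hold, $DA_M$ manifestly contains all simple modules (as its socle), so $\omega=DA_M[-1]$ is a canonical complex for $A_M$ by Definition \ref{def.canonical}.
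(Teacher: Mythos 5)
Your proof is correct, and its core is the same as the paper's: the decisive computation in both cases is that restriction $M^{*}=(DA_M)e_0\cap(M^{*}\ \ D_1) \to \Hom_{D_1}(N_1,D_1)$ is surjective, which you package as $\operatorname{Ext}^1_{A_M}(S_0,DA_M)=0$ via the resolution $0\to(0\ \ M)\to e_0A_M\to S_0\to 0$, while the paper runs Baer's criterion directly on submodules of $e_0A_M$ and $e_1A_M$ (these are equivalent over an artinian ring, where vanishing of $\operatorname{Ext}^1$ against the simples suffices for injectivity; your appeal to heredity is unnecessary but harmless). Where you genuinely diverge is in the two points the paper leaves implicit: you compute $\operatorname{soc}(DA_M)=(D_0\ \ 0)\oplus(0\ \ D_1)$ and use essentiality of the socle in a finite-length module to justify the ``injective hull'' claim, and you compute $\End_{A_M}(DA_M)$ directly as the $2\times 2$ matrix of Hom-groups between the summands $S_0=(D_0\ \ 0)$ and $E(S_1)=(M^{*}\ \ D_1)$, identifying the off-diagonal entry $\Hom(E(S_1),S_0)\cong\Hom_{D_0}(M^{*}_{D_0},D_0)=M^{**}\cong M$, whereas the paper only asserts that the map $A_M\to\End((DA_M)_{A_M})$ induced by the left action is ``easily checked'' to be an isomorphism. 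Your route buys an explicit verification of both conditions in Definition~\ref{def.canonical} at the cost of the one step you state without computation (that the transported bimodule structure agrees with the paper's left-action formula); since the definition only requires the existence of an isomorphism $\End_{A_M}((DA_M)_{A_M})\cong A_M$, this does not affect correctness. Two micro-quibbles: injectivity of $S_0$ needs $\operatorname{Ext}^1(S_1,S_0)=0$ as well as $\operatorname{Ext}^1(S_0,S_0)=0$ (immediate since $S_1\cong e_1A_M$ is projective, which you had already noted), and finite generation of $DA_M$ uses that $M^{*}$ is finite-dimensional on the right, i.e.\ the symmetric-duals hypothesis.
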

\begin{proof}
Note that $DA_M$ is an $A$-bimodule so there is an induced morphism $A_M \rto \End (DA_M)_{A_M}$ which is easily checked to be an isomorphism. It thus suffices to show that the direct summands $(D_0 \ 0)$ and $(M^* \ D_1)$ are injective. This is clear in the former case so we check the latter, using Baer's criterion.  Since $e_{0}$ and $e_{1}$ are the diagonal idempotents of $A_{M}$, it suffices to show that if $N = (N_0 \ N_1)$ is a submodule of the projective module $e_i A_M$, where $i=0$ or $i=1$, and if $\phi\colon (N_0 \ N_1) \rightarrow (M^* \ D_1)$ is $A_M$-linear, then we can lift $\phi$ to $\phi' \colon e_iA_M \rightarrow (M^* \ D_1) $. Now $e_1 A_M = (0 \ D_1)$ is simple, so when $i=1$ we are done as $(N_0 \ N_1)$ is either 0 or all of $e_1 A_M$. Suppose now that $N \leq e_0 A_M$. We are done if $N = e_0 A_M$, so we may assume that $N_0 = 0$. Thus $\phi$ is given by a $D_1$-linear map $N_1 \rightarrow D_1$, which we can lift to a linear map $\phi' \in M^*$. This defines the required lift $(D_0 \ M) \rightarrow (M^* \ D_1)$.
\end{proof}

Returning to the general setup of a hereditary artinian ring $A$, we immediately have
\begin{proposition}  \label{prop:DAtilting}
Any canonical complex $\omega$ for $A$ is a tilting complex inducing an auto-equivalence of $D^b_{fg}(A)$. In particular, there is a complex $\omega^{-1}$ of bimodules, such that
$$  - \otimes^{L}_A\omega^{-1} = \operatorname{RHom}_A(\omega,-) $$
is inverse to $- \otimes^L_A \omega$.
\end{proposition}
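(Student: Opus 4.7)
The plan is to verify Rickard's axioms identifying $\omega$ as a tilting complex with endomorphism ring $A$, then invoke Rickard's Morita theorem for derived categories to obtain the asserted autoequivalence; its inverse is automatically of the form $\RHom_A(\omega,-)$.

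The first two axioms are essentially formal. Since $A$ is artinian and $\End_A((DA)_A) \cong A$, the structure theorem for injective modules over an artinian ring identifies $DA$ with a finite direct sum of indecomposable injectives $E(S_i)^{m_i}$; in particular $DA$ is finitely generated. Hereditariness of $A$ then supplies a short projective resolution $0 \to P^{-1} \to P^{0} \to DA \to 0$ by finitely generated projectives, so $\omega \simeq [P^{-1} \to P^{0}]$ lies in $K^{b}(\mathrm{proj}\, A)$. For orthogonality,
$$\Hom_{D(A)}(\omega,\omega[i]) \;=\; \operatorname{Ext}^{i}_{A}(DA,DA),$$
which vanishes for $i>0$ by injectivity of $DA$, trivially for $i<0$, and equals $\End_{A}(DA) = A$ for $i=0$.

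The delicate axiom, and the main obstacle, is that $\omega$ generates $K^{b}(\mathrm{proj}\, A)$: one must show $A$ lies in the triangulated closure $\langle \omega \rangle$ under shifts, cones and direct summands. The strategy is a double induction. Each indecomposable injective $E(S)$ is a direct summand of $DA = \omega[1]$, hence lies in $\langle \omega \rangle$. By hereditariness, the quotient in
$$0 \to S \to E(S) \to E(S)/S \to 0$$
is again injective and thus splits as $\bigoplus_{j} E(T_{j})^{a_{j}}$ with $\sum_{j} a_{j}\,\ell(E(T_{j})) = \ell(E(S)) - 1$; induction on the Loewy length of $E(S)$ then places every simple into $\langle \omega \rangle$. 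A second induction on composition length---which is finite since $A$ is artinian, hence Noetherian by Hopkins--Levitzki---propagates the conclusion to every finitely generated $A$-module, and in particular to $A$ itself.

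With the three tilting axioms verified and $\End_{D(A)}(\omega) \cong A$, Rickard's theorem yields the autoequivalence $-\otimes^{L}_{A}\omega$ of $D^{b}_{fg}(A)$ with inverse $\RHom_{A}(\omega,-)$. Perfectness of $\omega$ lets us rewrite this inverse as $-\otimes^{L}_{A}\omega^{-1}$ for $\omega^{-1} := \RHom_{A}(\omega,A)$, completing the argument.
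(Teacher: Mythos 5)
Your route---checking Rickard's three conditions (perfectness, self-orthogonality, generation of $K^b(\mathrm{proj}\,A)$) and then invoking Rickard's theorem for the two-sided complex---is precisely the standard argument that the paper compresses into ``we immediately have'', and most of it is sound: self-orthogonality is exactly injectivity of $DA$ plus $\End_A(DA)\cong A$, and for generation, hereditariness makes quotients of injectives injective, so the sequences $0 \to S \to E(S) \to E(S)/S \to 0$, with $E(S)$ and the summands of $E(S)/S$ all direct summands of $DA$, put every simple (hence every finite length module, hence $A$) into the thick subcategory $\langle \omega \rangle$. In fact both of your inductions can be skipped: $0 \to A \to E(A) \to E(A)/A \to 0$ has both outer terms in $\operatorname{add}(DA)$ and places $A$ in $\langle \omega \rangle$ in one stroke.

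The step that does not hold as written is the very first one: ``$DA$ is a finite direct sum of indecomposable injectives, in particular finitely generated.'' Finiteness of the decomposition is fine (an infinite decomposition would produce infinitely many nonzero orthogonal idempotents in $\End_A(DA)\cong A$, impossible in an artinian ring), but over a general artinian ring---and this section deliberately works beyond finite-dimensional algebras---the injective hull of a simple module need not be finitely generated (Rosenberg--Zelinsky), so ``finite direct sum of indecomposable injectives'' does not imply ``finitely generated.'' This finiteness is not decorative: you need $DA$ of finite length to place $\omega$ in $K^b(\mathrm{proj}\,A)$, to run your length inductions, and even for $-\otimes^L_A\omega$ to preserve $D^b_{fg}(A)$ at all (since $A\otimes^L_A\omega = DA[-1]$). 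So either finite generation of $DA$ must be read into the definition of a canonical complex, or it must be verified in the case at hand; for the paper's application $A = A_M$ it is immediate, since the symmetric-duals hypothesis makes $M^*$ finite-dimensional on both sides and hence $DA_M$ of finite length. With that point supplied, the remainder of your argument, including the identification of the inverse as $\operatorname{RHom}_A(\omega,-) = -\otimes^L_A\operatorname{RHom}_A(\omega,A)$ via perfectness, is correct.
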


We now assume that $A$ has a canonical complex $\omega$.
\begin{definitions} \label{def.regular}
A finitely generated $A$-module $N$ is said to be {\em regular} if
$$
N \otimes^L_A \omega^n \in {\sf mod}\, A
$$
for all $n \in \mathbb{Z}$.  We let {\sf R} denote the full subcategory of regular $A$-modules.
\end{definitions}

The following result is standard, but is invariably stated with a finite-dimensionality hypothesis, so we include the proof in order that the reader may easily check that the hypothesis may be relaxed.
\begin{lemma}  \label{lem.regular}
Let $A$ be an hereditary artinian algebra with a canonical complex $\omega = (DA)[-1]$.
\begin{enumerate}
\item If $N$ is a finitely generated indecomposable $A$-module such that $N \otimes^L_A \omega^{-1}$ is not a module, then $N$ is a direct summand of $DA$ in which case $N \otimes^L_A \omega^{-1} \cong P[1]$ for some projective module $P$.
\item Any finitely generated indecomposable $A$-module which is not regular has the form $I \otimes^L_A \omega^n$ for some injective module $I$ and $n \in \mathbb{N}$ or the form $P \otimes^L_A \omega^{-n}$ for some projective module $P$ and $n \in \mathbb{N}$.
\item $\Hom_{D^b_{fg}(A)}({\sf R} , \omega^n) = 0$ for all $n \in \mathbb{Z}$.
\end{enumerate}
\end{lemma}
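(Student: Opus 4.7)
The plan rests on two structural facts. First, Proposition~\ref{prop:DAtilting} says $-\otimes^L_A\omega$ is an autoequivalence of $D^b_{fg}(A)$ with inverse $\RHom_A(\omega,-) = -\otimes^L_A\omega^{-1}$. Second, since $A$ has global dimension at most one, every $X\in D^b_{fg}(A)$ splits as $\bigoplus_i H^i(X)[-i]$ because the $\operatorname{Ext}^2$ groups appearing in the truncation triangles vanish; in particular, an indecomposable object of $D^b_{fg}(A)$ is a shifted indecomposable module. These two facts will be used repeatedly.

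For part (1) I would compute
\[
N \otimes^L_A \omega^{-1} \;=\; \RHom_A(\omega,N) \;=\; \RHom_A(DA,N)[1],
\]
whose cohomology is concentrated in degrees $-1$ and $0$, equal to $\Hom_A(DA,N)$ and $\operatorname{Ext}^1_A(DA,N)$ respectively. Thus the hypothesis that $N\otimes^L_A\omega^{-1}$ is not a module is equivalent to $\Hom_A(DA,N)\neq 0$. For any nonzero $f\colon DA \to N$, the image is a quotient of the injective module $DA$, and hence injective by hereditarity, so it is an injective submodule of $N$ and splits off; indecomposability then forces $\im f = N$. Thus $N$ is both injective and a quotient of $DA$, so it is a direct summand. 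Applying $-\otimes^L_A\omega^{-1}$ to $\omega\otimes^L_A\omega^{-1}=A$ (using $\omega=DA[-1]$) gives $DA\otimes^L_A\omega^{-1}=A[1]$, which restricts to $N\otimes^L_A\omega^{-1}\cong P[1]$ for the corresponding projective summand $P$ of $A$.

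For part (2) I would split on the sign of the bad twist. If $N\otimes^L_A\omega^{-k}$ fails to be a module for some $k\geq 1$, take the minimal such $k$ and put $M=N\otimes^L_A\omega^{-(k-1)}$; minimality makes $M$ a module, the autoequivalence makes it indecomposable, and part (1) applied to $M$ exhibits it as an indecomposable summand of $DA$, giving $N\cong I\otimes^L_A\omega^{k-1}$ with $k-1\in\mathbb{N}$. The remaining case --- when only positive twists fail --- is the main obstacle, since there is no ``direct'' dual of (1); I would bootstrap as follows. Choose the minimal $k\geq 1$ with $N\otimes^L_A\omega^k$ not a module and put $M=N\otimes^L_A\omega^{k-1}$, again an indecomposable module. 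Then $M\otimes^L_A\omega$ is indecomposable in $D^b_{fg}(A)$ and not a module, so by the hereditary splitting it collapses to $I[-1]$ for a single indecomposable module $I=M\otimes_A DA$. Applying $-\otimes^L_A\omega^{-1}$ gives $I\otimes^L_A\omega^{-1} = M[1]$, which is again not a module; hence (1) applies to $I$, forcing $I$ to be an indecomposable summand of $DA$ and $I\otimes^L_A\omega^{-1}\cong P[1]$ for some projective $P$. Comparing the two descriptions of $I\otimes^L_A\omega^{-1}$ yields $M\cong P$ projective, whence $N\cong P\otimes^L_A\omega^{-(k-1)}$.

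For part (3) the autoequivalence gives
\[
\Hom_{D^b_{fg}(A)}(N,\omega^n) \;\cong\; \Hom_{D^b_{fg}(A)}\bigl(N\otimes^L_A\omega^{-n},\,A\bigr) \;=\; \Hom_{{\sf mod}\,A}(M,A),
\]
where $M := N\otimes^L_A\omega^{-n}$ is a module by the regularity of $N$ and is itself regular, since $\sf R$ is closed under the autoequivalence. Suppose a nonzero map $M\to A$ exists. Its image is a submodule of the projective module $A$, hence projective by hereditarity, and the surjection onto the image splits, producing a nonzero indecomposable projective summand $P''$ of $M$. Then $P''\otimes^L_A\omega = (P''\otimes_A DA)[-1]$, and $P''\otimes_A DA$ is a nonzero summand of $A\otimes_A DA = DA$. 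Hence $M\otimes^L_A\omega$ has nonzero cohomology in degree one, contradicting the regularity of $M$, so $\Hom(M,A)=0$.
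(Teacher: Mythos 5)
Your route is essentially the paper's: in part (1) you use that indecomposables in $D^b_{fg}(A)$ over a hereditary ring are shifted modules to identify $N\otimes^L_A\omega^{-1}$ with $\Hom_A(DA,N)[1]$, and in part (3) you reduce, via the autoequivalence, to showing $\Hom_A(M,A)=0$ for regular $M$ and then split off a projective image whose twist by $\omega$ has cohomology in degree one. Your part (2) is a welcome elaboration of what the paper dismisses with ``readily follows'': the minimal-twist reduction and, in the positive-twist case, the bootstrap through $I=M\otimes_A DA$ followed by an application of part (1) to $I$ are correct and complete. (A cosmetic point in (3): the nonvanishing of $P''\otimes_A DA$ is cleanest from the fact that $-\otimes^L_A\omega$ is an autoequivalence and so kills no nonzero object.)

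There is, however, one genuine gap, in part (1): you deduce that $N$ is a direct summand of $DA$ from ``$N$ is injective and a quotient of $DA$.'' That inference is not valid: it is surjections onto \emph{projectives} that split, not surjections onto injectives. For example, over the path algebra of the quiver $1\to 2$ the two-dimensional indecomposable injective surjects onto a simple injective module that is not a direct summand of it. The step you need is exactly where condition (ii) of Definition~\ref{def.canonical} enters, and your argument never invokes it: since $A$ is artinian, the indecomposable injective $N$ is the injective hull of its simple socle $S$; because $DA$ contains all simple modules, $S$ embeds in $DA$, this embedding extends to a map $N\to DA$ by injectivity of $DA$, the extension is injective because $S$ is essential in $N$, and it splits because $N$ is injective. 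This is the paper's argument (``$DA$ contains all the simple modules, so the indecomposable injective $N$ must be a direct summand of $DA$''), and with this local repair the remainder of your proof, including the uses of part (1) inside part (2), goes through.
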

\begin{proof}
We assume the hypotheses in part~(1) and recall that, since $A$ is hereditary, every indecomposable in $D^b_{fg}(A)$ has the form $L[j]$ for some indecomposable $A$-module $L$ and $j \in \mathbb{Z}$. Now $\omega$ lives in cohomological degree 1 and $N \otimes^L_A \omega^{-1}$ is indecomposable too, so the only possibility is that $N \otimes^L_A \omega^{-1} \cong P[1]$ where $P = \Hom_A(DA, N) \neq 0$. Picking any non-zero homomorphism $\phi\colon DA \rto N$, we see injectivity of $DA$ implies injectivity of $\im \phi$. Indecomposability of $N$ ensures that $N = \im \phi$. Now $DA$ contains all the simple modules, so the indecomposable injective module $N$ must be a direct summand of $DA$. We also see $P$ is projective since $A[1] \otimes^L_A \omega \cong DA$. This completes the proof of (1) from which part (2) readily follows.

Suppose now that $N$ is a regular module, so the same is true of $N \otimes^L_A \omega^{-n}$. Part~(3) thus follows if we can show that $\Hom_A(N, A) = 0$. If not, let $\phi\colon N \rto A$ be a non-zero homomorphism. Now $A$ is hereditary, so $P:= \im \phi$ is a non-zero projective summand of $N$. But $P \otimes^L_A \omega = P \otimes^L_A (DA)[-1]$ has non-zero cohomology in degree 1, contradicting regularity of $N$.
\end{proof}

\section{Preprojective and preinjective objects}
\label{sec:preproj}

In this section, we consider the bimodule species  $A = \begin{pmatrix} D_{0} & M \\ 0 & D_{1} \end{pmatrix}$ where $M$ is a $D_{0}-D_{1}$-bimodule with symmetric duals  and $\omega$ is the canonical complex $DA[-1]$ introduced in Lemma~\ref{lem.canonical}. As usual, we assume the left-right dimension $(m,n)$ of $M$ satisfies $mn \geq 4$. The main purpose of this section is to describe the indecomposable ``preprojective'' objects $(e_0A) \otimes_A^L \omega^{-i} , (e_1A) \otimes_A^L \omega^{-i} \ \  (i \in \mathbb{N})$ and indecomposable ``preinjective'' objects $(e_0A) \otimes_A^L \omega^i , (e_1A) \otimes_A^L \omega^i \ \  (i \in \mathbb{N})$ in terms of the noncommutative symmetric algebra $\mathbb{S} := \mathbb{S}^{nc}(M)$.  As in the classical theory, these will give analogues of the line bundles on $\mathbb{P}^1$. The preprojective objects were also essentially computed in \cite{dlab}, but their definition of preprojective objects is slightly different (ours is potentially a complex), and our calculation is also different, being an elegant direct computation based on the technology of Euler exact sequences in the theory of noncommutative symmetric algebras.

We compute $- \otimes^L_A\omega^{-1} = \operatorname{RHom}(DA,-)[1]$ by using the following bimodule right projective resolution of $DA$. It is a mild generalization of that constructed in \cite{king}.
\begin{equation} \label{eqn.king}
0  \rightarrow  (DA)e_{0} \otimes M \otimes e_{1}A  \rightarrow  ((DA)e_{0} \otimes e_{0}A) \oplus ((DA)e_{1} \otimes e_{1}A)  \rightarrow  DA  \rightarrow  0
\end{equation}
where the indicated tensor products are over appropriate $D_i$, and the maps are induced by multiplication.  This sequence equals
$$
0  \rightarrow  \begin{pmatrix} D_{0} \\ M^{*} \end{pmatrix} \otimes M \otimes e_{1}A  \rightarrow  \left(\begin{pmatrix} D_{0} \\ M^{*} \end{pmatrix} \otimes e_{0}A\right) \oplus \left(\begin{pmatrix} 0 \\ D_{1}\end{pmatrix} \otimes e_{1}A\right)  \rightarrow  DA  \rightarrow  0.
$$
Given a right $A$-module $P$, we wish to apply $\Hom_A(-,P)$ to the above resolution. The following lemma will assist us in this regard.
\begin{lemma} \label{lemma.adjoints}
Let $N$ be a finite-dimensional right $D_{i}$-module. Let $f_{1}, \ldots, f_{n}$ denote a right-basis for $N$, and $f_{1}^{*}, \ldots, f_{n}^{*}$ denote the dual left basis for ${N}^{*}$.  For any right $A$-module $P$, the function
\begin{equation} \label{eqn.adjoint}
\operatorname{Hom}_{A}(N \otimes e_{i}A, P) \rightarrow \operatorname{Hom}_{A}(e_{i}A, P) \otimes N^{*}
\end{equation}
defined by
$$
\psi \mapsto \sum_{j} \psi(f_{j} \otimes -)\otimes f_{j}^{*}
$$
is a group isomorphism natural in both $P$ and $N$. In particular, if $N$ is a $D_{i+1}-D_i$-bimodule, (\ref{eqn.adjoint}) is an isomorphism of right $D_{i+1}$-modules.
\end{lemma}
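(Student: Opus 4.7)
The plan is to realize the claimed map as a composition of two standard adjunction-type isomorphisms. Since $e_iA$ is naturally a $(D_i,A)$-bimodule via $D_i\cong e_iAe_i$, tensor-hom adjunction together with the canonical identification $\Hom_A(e_iA,P)\cong Pe_i$ sending $\phi\mapsto\phi(e_i)$ yields a natural isomorphism
$$\Hom_A(N\otimes_{D_i}e_iA,\,P)\;\cong\;\Hom_{D_i}(N_{D_i},\,Pe_i),$$
under which a homomorphism $\psi$ corresponds to its restriction $n\mapsto\psi(n\otimes e_i)$.

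The next step is the classical dual-basis isomorphism available whenever $N_{D_i}$ is finite-dimensional: for any right $D_i$-module $Q$ one has
$$\Hom_{D_i}(N_{D_i},Q)\;\cong\;Q\otimes_{D_i}N^{*},\qquad \phi\;\longmapsto\;\sum_j\phi(f_j)\otimes f_j^{*},$$
with inverse $q\otimes f\mapsto(n\mapsto q\,f(n))$. This is straightforwardly natural in both $Q$ and $N$ (and one checks independence from the basis). Taking $Q=Pe_i$, composing with the previous step, and re-identifying $Pe_i$ with $\Hom_A(e_iA,P)$, the image of $\psi$ becomes precisely $\sum_j\psi(f_j\otimes-)\otimes f_j^{*}$, which is the formula displayed in the lemma. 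Naturality in $P$ and $N$ then follows from naturality of each constituent.

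For the last sentence, when $N$ is a $D_{i+1}$-$D_i$-bimodule both sides acquire right $D_{i+1}$-module structures: on the left via $(\psi\cdot d)(n\otimes a):=\psi(dn\otimes a)$, and on the right via the natural right $D_{i+1}$-action on $N^{*}$ defined by $(f\cdot d)(n):=f(dn)$. The one nontrivial bookkeeping step is to verify the isomorphism respects these. Writing $df_k=\sum_j f_j c_{jk}$ with $c_{jk}\in D_i$, a direct evaluation on an arbitrary element shows $f_k^{*}\cdot d=\sum_j c_{kj}f_j^{*}$ in $N^{*}$; using that the tensor product is balanced over $D_i$, one then rewrites
$$\sum_j\psi(df_j\otimes-)\otimes f_j^{*}\;=\;\sum_{j,k}\psi(f_k\otimes-)\,c_{kj}\otimes f_j^{*}\;=\;\sum_k\psi(f_k\otimes-)\otimes (f_k^{*}\cdot d),$$
which is exactly the $D_{i+1}$-linearity condition.

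I expect the adjunction and dual-basis identifications to be entirely formal; the only piece of honest computation is the change-of-basis manipulation with the matrix $(c_{jk})$ in the last paragraph, and even that is short.
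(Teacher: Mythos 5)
Your argument is essentially the paper's own proof: the paper also factors the map as the tensor--hom adjunction $\operatorname{Hom}_{A}(N \otimes e_{i}A, P) \cong \operatorname{Hom}_{D_{i}}(N, \operatorname{Hom}_{A}(e_{i}A,P))$ followed by the Eilenberg--Watts (dual-basis) isomorphism $\operatorname{Hom}_{D_{i}}(N,Q) \cong Q \otimes N^{*}$, leaving the identification of the composite with the displayed formula to the reader. Your version is correct and simply makes explicit the verifications the paper omits, including the change-of-basis check of right $D_{i+1}$-linearity.
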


\begin{proof}
We describe a map (\ref{eqn.adjoint}) and leave it to the reader to check it is the map indicated.  We have isomorphisms
\begin{eqnarray*}
\operatorname{Hom}_{A}(N \otimes e_{i}A, P) & \overset{\cong}{\longrightarrow} & \operatorname{Hom}_{D_{i}}(N, \operatorname{Hom}_{A}(e_{i}A,P)) \\
& \overset{\cong}{\longrightarrow} & \operatorname{Hom}_{A}(e_{i}A, P) \otimes N^{*}
\end{eqnarray*}
by adjointness and the Eilenberg-Watts Theorem.
\end{proof}

Consider an $A-D_i$-bimodule $N = {N_0 \choose N_1}$ so $N_j$ is a $D_j-D_i$-bimodule and there is a multiplication map $\mu \colon M \otimes N_1 \rto N_0$. Taking the right dual of $\mu$ and using adjunction properties gives a new multiplication map $N_0^* \otimes M \rto N_1^*$ and hence an $A$-module structure on $(N_0^* \ \ N_1^*)$. We of course have

\begin{lemma}  \label{lem.formuladual}
There is an isomorphism of $D_i-A$-bimodules $N^* \cong (N_0^* \ \ N_1^*)$.
\end{lemma}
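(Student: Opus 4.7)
The strategy is to use the decomposition $N = e_0N \oplus e_1N = N_0 \oplus N_1$ as right $D_i$-modules coming from the diagonal idempotents of $A$. I would define the candidate isomorphism
$$
\Phi \colon N^* \;=\; \Hom_{D_i}(N,D_i) \;\lrto\; (N_0^* \ \ N_1^*), \qquad \Phi(\phi) = (\phi \circ \iota_0,\ \phi \circ \iota_1),
$$
where $\iota_j \colon N_j \hookrightarrow N$ is the canonical inclusion. Because the inclusions exhibit $N$ as a $D_i$-module direct sum, the universal property of the direct sum shows at once that $\Phi$ is a group isomorphism, with inverse sending $(\phi_0,\phi_1)$ to $\phi_0 \circ \pi_0 + \phi_1 \circ \pi_1$ for the projections $\pi_j$.

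Next I would check that $\Phi$ respects the left $D_i$-action, which is immediate since $D_i$ acts on both sides by post-composition with left multiplication on the target $D_i$, and post-composition commutes with restriction to summands. The only substantive check is right $A$-linearity; for this I would test against an arbitrary element $a = \bigl(\begin{smallmatrix}\alpha & m\\ 0 & \beta\end{smallmatrix}\bigr)$ and an $n = n_0 + n_1 \in N_0 \oplus N_1$. Using the $A$-action on $N$ one computes
$$
an = (\alpha n_0 + \mu(m \otimes n_1)) + \beta n_1 \in N_0 \oplus N_1,
$$
so $(\phi \cdot a)\circ \iota_0 = \phi_0 \cdot \alpha$ and $(\phi\cdot a)\circ \iota_1 (n_1) = \phi_0(\mu(m\otimes n_1)) + \phi_1(\beta n_1)$. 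To match this with the product $(\phi_0,\phi_1)\cdot a$ in $(N_0^* \ \ N_1^*)$ one needs to verify that the multiplication $N_0^* \otimes_{D_0} M \to N_1^*$ constructed by dualizing $\mu$ and applying the tensor–hom adjunction is exactly the map $\phi_0 \otimes m \mapsto \bigl(n_1 \mapsto \phi_0(\mu(m \otimes n_1))\bigr)$. This is a direct unwinding of the canonical adjunction $\Hom_{D_i}(M \otimes_{D_1} N_1, D_i) \cong \Hom_{D_1}(M, N_1^*)$ followed by the Eilenberg–Watts-type identification $\Hom_{D_1}(M, N_1^*) \cong N_1^* \otimes M^*$ used previously, so no surprise arises.

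The only place requiring care — and hence the expected obstacle — is making the conventions for the dualized multiplication on $(N_0^* \ \ N_1^*)$ precisely match the formula obtained by evaluating $\phi \cdot a$ on $N_1$, particularly because the ambient $D_0$-action on $M$ flips sides under dualization. Once those conventions are fixed to agree with the row-vector $A$-action used throughout Section~\ref{sec:species}, the verification in the preceding paragraph is simply a line of formal manipulation, and $\Phi$ is seen to be an isomorphism of $D_i$–$A$-bimodules.
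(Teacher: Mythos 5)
Your proof is correct: the paper states this lemma without proof (``We of course have\ldots''), and your direct verification --- decomposing $N$ via the idempotents $e_0,e_1$, checking left $D_i$-linearity, and matching the right $A$-action by unwinding the dualized multiplication to $\phi_0 \otimes m \mapsto \bigl(n_1 \mapsto \phi_0(\mu(m \otimes n_1))\bigr)$ --- is exactly the routine check the authors leave implicit. In particular you correctly identified the only point needing care, namely that the adjunction/Eilenberg--Watts conventions for $N_0^* \otimes M \to N_1^*$ agree with evaluation of $\phi\cdot a$ on $N_1$.
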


To be able to invoke the theory of noncommutative symmetric algebras,  we define the right $A$-modules
$$
P_{i} = \begin{cases} (\mathbb{S}_{-i0} \ \  \mathbb{S}_{-i1}) \mbox{ for $i \geq -1$} \\
({\mathbb{S}}^{*}_{0, -i-2}\ \  {\mathbb{S}}^{*}_{1, -i-2}) \mbox{ otherwise }\end{cases}
$$
with $A$-module multiplication induced by multiplication (or its dual) in the noncommutative symmetric algebra.

It follows from Lemmas \ref{lemma.adjoints} and \ref{lem.formuladual} and (\ref{eqn.king}), that $\operatorname{RHom}_{A}(DA,P_{i})$ is quasi-isomorphic to the complex
\begin{equation} \label{eqn.complex}
 P_ie_0 \otimes (D_{0} \quad M) \ \ \oplus \ \ P_ie_1 \otimes (0 \quad D_{1}) \xrightarrow{\phi} P_ie_1 \otimes M^{*} \otimes (D_{0} \quad M)
.\end{equation}
In order to explicitly compute $\operatorname{RHom}_{A}(DA,P_{i})$ (in Corollary \ref{cor:preproj}), we will need the Euler exact sequence, which we recall from \cite[Theorem 3.4 and Corollary 3.5]{abstractp1}.
\begin{theorem}  \label{thm.adam}
For $i \in \mathbb{Z}$, multiplication in $\mathbb{S}$ induces an exact sequence of right $\mathbb{S}$-modules
$$
0 \rightarrow Q_{i-2} \otimes \varepsilon_{i}\mathbb{S} \rightarrow \mathbb{S}_{i-2, i-1} \otimes \varepsilon_{i-1}\mathbb{S} \rightarrow \varepsilon_{i-2}\mathbb{S} \rightarrow \varepsilon_{i-2}\mathbb{S}/\varepsilon_{i-2}\mathbb{S}_{\geq i-1} \rightarrow 0.
$$
Furthermore, for all $i \leq j$, the canonical complex
$$
0 \rightarrow \mathbb{S}_{ij} \otimes Q_{j} \rightarrow \mathbb{S}_{i,j+1} \otimes M^{j+1*} \rightarrow \mathbb{S}_{i,j+2} \rightarrow 0
$$
is exact.
\end{theorem}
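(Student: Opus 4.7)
The plan is to prove the second (``canonical complex'') statement by induction on $j-i \geq 0$, and then to derive the first (four-term) statement by extracting its graded components. The base case $j=i$ reduces to
$$0 \to D_i \otimes Q_i \to M^{i*} \otimes M^{i+1*} \to \mathbb{S}_{i,i+2} \to 0,$$
which is just the defining presentation of $\mathbb{S}_{i,i+2}$, provided $\eta_i$ is injective. I would verify the latter by noting that $\eta_i(1)$ corresponds to the identity under the canonical isomorphism $M^{i*} \otimes_{D_{i+1}} M^{i+1*} \cong \End_{D_{i+1}}(M^{i+1*})$, so is nonzero, whence $Q_i \cong D_i$.

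For the inductive step, introduce notation $T_{i,k} = M^{i*} \otimes \cdots \otimes M^{(k-1)*}$ and $R_{i,k} = \sum_{l=i}^{k-2} T_{i,l} \otimes Q_l \otimes T_{l+2,k}$, so that $\mathbb{S}_{i,k} = T_{i,k}/R_{i,k}$. The sum $R_{i,j+2}$ splits into ``interior'' relations at positions $l \leq j-1$, which form exactly $R_{i,j+1} \otimes M^{j+1*}$, and the ``terminal'' piece $T_{i,j} \otimes Q_j$. This gives a canonical factorization
$$T_{i,j+2} \twoheadrightarrow \mathbb{S}_{i,j+1} \otimes M^{j+1*} \twoheadrightarrow \mathbb{S}_{i,j+2}$$
whose second surjection has kernel equal to the image of $T_{i,j} \otimes Q_j$. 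The containment $R_{i,j} \otimes Q_j \subset R_{i,j+1} \otimes M^{j+1*}$ lets this map descend to $\mathbb{S}_{i,j} \otimes Q_j$, already yielding middle exactness and the rightmost surjection in the desired three-term sequence.

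The main obstacle will be showing the descended map $\mathbb{S}_{i,j} \otimes Q_j \to \mathbb{S}_{i,j+1} \otimes M^{j+1*}$ is injective, equivalently verifying the identity
$$(T_{i,j} \otimes Q_j) \cap (R_{i,j+1} \otimes M^{j+1*}) = R_{i,j} \otimes Q_j$$
inside $T_{i,j+2}$. The plan here is to leverage the isomorphism $Q_j \cong D_j$ furnished by $\eta_j$ together with the adjunction $(-\otimes M^{j*}, -\otimes M^{j+1*})$: contracting the terminal slot through the adjunction data yields a natural retraction of the terminal $Q_j$-insertion, and a diagram chase then reduces the intersection identity to an instance of the inductive hypothesis, namely the Euler sequence at a lower value of $j-i$.

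For the first statement I would extract graded components. In each degree $k \geq i$ the fourth term vanishes and the sequence reduces to
$$0 \to Q_{i-2} \otimes \mathbb{S}_{i,k} \to M^{(i-2)*} \otimes \mathbb{S}_{i-1,k} \to \mathbb{S}_{i-2,k} \to 0,$$
which is the ``left-multiplication'' mirror of the canonical complex; by the symmetric duals hypothesis $M \cong M^{**}$, the same inductive argument applies with the roles of the first and last tensor slots interchanged. In the low degrees $k=i-2,i-1$ the first two terms vanish and the sequence collapses to the tautological identifications $\mathbb{S}_{i-2,i-2} = D_{i-2}$ and $\mathbb{S}_{i-2,i-1} = M^{(i-2)*}$, which match the fourth-term quotient $\varepsilon_{i-2}\mathbb{S}/\varepsilon_{i-2}\mathbb{S}_{\geq i-1}$ concentrated in degree $i-2$.
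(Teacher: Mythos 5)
There is a genuine gap, and it sits exactly where the real content of this theorem lies. Your inductive step hinges on the intersection identity $(T_{i,j}\otimes Q_j)\cap(R_{i,j+1}\otimes M^{j+1*}) = R_{i,j}\otimes Q_j$, and the proposed mechanism --- contract the terminal two slots via the adjunction/duality data to retract onto $Q_j$ and then quote the inductive hypothesis --- does not work. First, the contraction $M^{j*}\otimes M^{j+1*}\rightarrow D_j$ need not restrict to an isomorphism on $Q_j$: in the classical case $D_0=D_1=k$, $M=k^n$, the canonical element $\eta_j(1)=\sum\phi_a\otimes\phi_a^*$ contracts to $n\cdot 1$, which vanishes when $\operatorname{char}k$ divides $n$, and since $Q_j$ is only a sub-bimodule (not a direct summand as a bimodule in general) there is no canonical retraction to fall back on. Second, and more fatally, even when the contraction is a retraction it does not reduce the intersection to a lower case: writing $R_{i,j+1}\otimes M^{j+1*} = R_{i,j}\otimes M^{j*}\otimes M^{j+1*} + T_{i,j-1}\otimes Q_{j-1}\otimes M^{j+1*}$, the first summand contracts into $R_{i,j}$ as you intend, but the second contracts onto essentially all of $T_{i,j}$ (the overlap of the consecutive relations $Q_{j-1}$ and $Q_j$ is precisely where the difficulty is concentrated), so the chase yields no information. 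A decisive symptom is that your argument never uses the hypothesis $mn\geq 4$, yet the theorem is false without it: for $m=n=1$ one has $\mathbb{S}_{i,i+2}=0$ while $\mathbb{S}_{i,i+1}\otimes Q_{i+1}\neq 0$, so the canonical complex at $j=i+1$ is not exact, and indeed your intersection identity already fails there. Any correct proof must bring in $mn\geq 4$, typically through dimension counts for $\mathbb{S}_{ij}$ (which satisfy a Fibonacci-type recursion in $m,n$) or an explicit basis/overlap analysis in the spirit of Koszulity arguments.

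For comparison: the paper does not reprove this statement at all; it imports it from \cite[Theorem 3.4 and Corollary 3.5]{abstractp1}, where the exactness of the Euler sequences is a substantial theorem whose proof is considerably more delicate than a one-step induction. Your base case (injectivity of $\eta_i$, so $Q_i\cong D_i$, and the defining presentation of $\mathbb{S}_{i,i+2}$) and the observation that right-exactness of the three-term complex is formal are fine; the left-exactness in the inductive step, and the place where $mn\geq 4$ enters, are what remain unproved, and the reduction of the four-term sequence to a ``left-multiplication mirror'' of the canonical complex also rests on that same unproved core.
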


\begin{proposition} \label{prop.quasiiso}
If $i \geq -1$, then the map $\phi$ from (\ref{eqn.complex}) is injective and its cokernel is $P_{i+2}$.
\end{proposition}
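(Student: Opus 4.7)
Let me set $k := -i$ so $k \leq 1$. The plan is to unpack $\phi$ via projection to the $e_0$- and $e_1$-components of its source and target, then identify both resulting maps with pieces of the Euler exact sequences of Theorem \ref{thm.adam}. Tracing through (\ref{eqn.king}) and the adjunction isomorphisms of Lemma \ref{lemma.adjoints}, the $e_0$-component of $\phi$ is the map $\mbbS_{k,0} \to \mbbS_{k,1} \otimes M^*$ obtained from $\eta_0 \colon D_0 \to M \otimes M^*$ followed by multiplying the first two tensor factors. After identifying $\mbbS_{k,0}$ with $\mbbS_{k,0} \otimes Q_0$ via $\eta_0$, this is precisely the leftmost inclusion in the second Euler exact sequence of Theorem~\ref{thm.adam} with $j=0$; hence it is injective with cokernel $\mbbS_{k,2}$. (In the boundary case $k=1$ the domain vanishes and this is trivial.)

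For the $e_1$-component $\mbbS_{k,0} \otimes M \oplus \mbbS_{k,1} \to \mbbS_{k,1} \otimes M^* \otimes M$, the same trace identifies the first summand map as the previous $e_0$-component map tensored on the right with $M$ over $D_1$, and the second summand map as $1 \otimes \eta_1 \colon \mbbS_{k,1} \otimes D_1 \to \mbbS_{k,1} \otimes M^* \otimes M$, whose image is the subspace $\mbbS_{k,1} \otimes Q_1$. Since $M$ is flat over $D_1$, the Euler sequence for $j=0$ yields
$$0 \to \mbbS_{k,0} \otimes M \to \mbbS_{k,1} \otimes M^* \otimes M \to \mbbS_{k,2} \otimes M \to 0,$$
and splicing with the Euler sequence for $j=1$ produces a surjection $\mbbS_{k,1} \otimes M^* \otimes M \twoheadrightarrow \mbbS_{k,3}$ whose kernel is the preimage of $\mbbS_{k,1} \otimes Q_1 \subset \mbbS_{k,2} \otimes M$, namely $\mbbS_{k,0} \otimes M + \mbbS_{k,1} \otimes Q_1$. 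This sum is exactly the image of the $e_1$-component of $\phi$, so its cokernel is $\mbbS_{k,3}$. Injectivity follows by projecting to $\mbbS_{k,2} \otimes M$: the first summand is annihilated while the second embeds via the Euler sequence for $j=1$, so any relation $u+v = 0$ with $u \in \mbbS_{k,0} \otimes M$ and $v \in \mbbS_{k,1} \otimes Q_1$ forces $v=0$ and then $u=0$.

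Combining both components and invoking 2-periodicity (Remark \ref{rem.2periodic}) to identify $\mbbS_{k,2} = \mbbS_{-i-2,0}$ and $\mbbS_{k,3} = \mbbS_{-i-2,1}$, together with the observation that the right $A$-action on the cokernel (induced by multiplication in $\mbbS$) matches that of $P_{i+2}$, the cokernel of $\phi$ is $P_{i+2}$. The main technical obstacle is the bookkeeping that identifies $\phi$ coming from the resolution (\ref{eqn.king}) with these explicit Euler-sequence maps; once this identification is in hand, the rest reduces to two applications of Theorem \ref{thm.adam} and flatness over the division rings $D_i$.
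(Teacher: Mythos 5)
Your proof is correct and follows essentially the same route as the paper: split $\phi$ into its $e_0$- and $e_1$-components, identify them with maps from the Euler exact sequences of Theorem~\ref{thm.adam} (the second tensored with $M$, harmless since modules over division rings are free), splice the $j=0$ and $j=1$ sequences to get injectivity and cokernel $\mbbS_{-i-2,1}$, and invoke 2-periodicity plus a brief check of the $A$-module structure. The only blemishes are cosmetic (the tensor with $M$ and the flatness are over $D_0$, not $D_1$), which do not affect the argument.
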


\begin{proof}  We first check $\phi_0 = \phi \otimes_A Ae_0$ is injective with cokernel $P_{i+2}e_0$.  It suffices to prove that the adjoint of multiplication, $\mathbb{S}_{-i 0} \rightarrow \mathbb{S}_{-i 1} \otimes M^{*}$, is injective and has cokernel $\mathbb{S}_{-i-2, 0}$.  This follows from Theorem \ref{thm.adam} which gives exactness of
$$
0 \rightarrow \mathbb{S}_{-i 0} \otimes Q  \rightarrow \mathbb{S}_{-i 1} \otimes M^{*} \rightarrow \mathbb{S}_{-i 2} \rightarrow 0
.$$

We now examine $\phi_1 = \phi\otimes_A Ae_1$.  By definition of (\ref{eqn.complex}), the kernel of multiplication $\mathbb{S}_{-i 1} \otimes M^{*} \otimes M \rightarrow \mathbb{S}_{-i 3}$ contains $\im \phi_1$.

In addition, by Theorem \ref{thm.adam}, we have short-exact sequences
\begin{equation} \label{eqn.ses1}
0 \rightarrow \mathbb{S}_{-i 0} \otimes Q \otimes M \rightarrow \mathbb{S}_{-i 1} \otimes M^{*} \otimes M \rightarrow \mathbb{S}_{-i 2}\otimes M \rightarrow 0
\end{equation}
and
\begin{equation} \label{eqn.ses2}
0 \rightarrow \mathbb{S}_{-i 1} \otimes Q  \rightarrow \mathbb{S}_{-i 2} \otimes M \rightarrow \mathbb{S}_{-i 3} \rightarrow 0.
\end{equation}
The sequence (\ref{eqn.ses1}) gives an isomorphism
$$
\mathbb{S}_{-i 1} \otimes M^{*} \otimes M/\mathbb{S}_{-i 0} \otimes Q \otimes M \cong \mathbb{S}_{-i 2}\otimes M.
$$
Since the kernel of multiplication
$$
\mathbb{S}_{-i 2}\otimes M \rightarrow \mathbb{S}_{-i 3}
 \cong \mbbS_{-i-2,1}$$
is $\mathbb{S}_{-i 1} \otimes Q$ by (\ref{eqn.ses2}), $\phi_1$ is injective with cokernel $\mbbS_{-i-2,1} =P_{i+2}e_1$. Note that we have used the 2-periodicity of
$\mbbS$ above (see Remark~\ref{rem.2periodic}).


Thus, we conclude that the cokernel has the form $(\mathbb{S}_{-i-2, 0}\ \  \mathbb{S}_{-i-2, 1})$, and it is straightforward to show that the module structure on the cokernel agrees with $P_{i+2}$.
\end{proof}

\begin{prop}  \label{prop.preinj}
For $i \leq -4$, the map $\phi$ in (\ref{eqn.complex}) is injective and its cokernel is $P_{i+2}$.
\end{prop}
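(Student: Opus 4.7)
\emph{Proof proposal.} The plan is to mirror Proposition~\ref{prop.quasiiso}, but apply Theorem~\ref{thm.adam} to appropriate ``columns'' of $\mbbS$ and then dualize. Since $i\le -4$, we have $-i-4\ge 0$, so all the relevant components of $\mbbS$ live in its non-negative part. Specializing the first Euler sequence of Theorem~\ref{thm.adam} at $k=2$ and $k=3$, reading off the $(0,-i-2)$- and $(1,-i-2)$-components respectively, and using $Q_0\cong D_0$, $Q_1\cong D_1$, $\mbbS_{0,1}=M$, $\mbbS_{1,2}=M^*$ together with the 2-periodicity of $\mbbS$ (Remark~\ref{rem.2periodic}), we obtain
\begin{align*}
(\mathrm{I})\colon\ & 0\to \mbbS_{0,-i-4}\to M\otimes \mbbS_{1,-i-2}\to \mbbS_{0,-i-2}\to 0,\\
(\mathrm{II})\colon\ & 0\to \mbbS_{1,-i-4}\to M^*\otimes \mbbS_{0,-i-4}\to \mbbS_{1,-i-2}\to 0
\end{align*}
(the boundary case $i=-4$ of (II) degenerates but still holds). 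Applying the right dual $(-)^*$, which is exact on finite-dimensional bimodules over division rings and satisfies $(X\otimes Y)^*\cong Y^*\otimes X^*$, yields
\begin{align*}
(\mathrm{I}^*)\colon\ & 0\to \mbbS^*_{0,-i-2}\to \mbbS^*_{1,-i-2}\otimes M^*\to \mbbS^*_{0,-i-4}\to 0,\\
(\mathrm{II}^*)\colon\ & 0\to \mbbS^*_{1,-i-2}\to \mbbS^*_{0,-i-4}\otimes M\to \mbbS^*_{1,-i-4}\to 0.
\end{align*}

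Just as in Proposition~\ref{prop.quasiiso}, $\phi_0=\phi\otimes_A Ae_0$ identifies with the map $\mbbS^*_{0,-i-2}\to \mbbS^*_{1,-i-2}\otimes M^*$ adjoint to multiplication $M\otimes \mbbS_{1,-i-2}\to \mbbS_{0,-i-2}$, so $(\mathrm{I}^*)$ immediately gives injectivity with cokernel $\mbbS^*_{0,-i-4}=P_{i+2}e_0$. The map $\phi_1=\phi\otimes_A Ae_1$ from $(\mbbS^*_{0,-i-2}\otimes M)\oplus \mbbS^*_{1,-i-2}$ to $\mbbS^*_{1,-i-2}\otimes M^*\otimes M$ has components $\phi_0\otimes 1_M$ and $1\otimes\eta_1$. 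Tensoring $(\mathrm{I}^*)$ on the right with $M$ produces a short exact sequence whose surjection $\mbbS^*_{1,-i-2}\otimes M^*\otimes M\twoheadrightarrow \mbbS^*_{0,-i-4}\otimes M$ annihilates the image of $\phi_0\otimes 1_M$, so the cokernel of $\phi_1$ equals the cokernel of the induced map $\mbbS^*_{1,-i-2}\to \mbbS^*_{0,-i-4}\otimes M$ coming from $1\otimes\eta_1$. Upon identifying this induced map with the injection in $(\mathrm{II}^*)$, we obtain injectivity of $\phi_1$ via a diagonal diagram chase and its cokernel as $\mbbS^*_{1,-i-4}=P_{i+2}e_1$. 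A routine check then verifies that the induced right $A$-module structure on the cokernel coincides with that of $P_{i+2}$.

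The main obstacle is the matching of maps just used: that the composition $\mbbS^*_{1,-i-2}\xrightarrow{1\otimes\eta_1}\mbbS^*_{1,-i-2}\otimes M^*\otimes M\twoheadrightarrow \mbbS^*_{0,-i-4}\otimes M$ coincides, up to sign, with the injection in $(\mathrm{II}^*)$. This requires unpacking the construction of Theorem~\ref{thm.adam}'s sequence (II) in terms of the adjunction units $\eta_i$ and the degree-two relations defining $\mbbS$, and then tracing through the tensor-factor swaps under $(-)^*$. Once this naturality check is in hand, the rest of the argument is formal and parallels Proposition~\ref{prop.quasiiso} closely.
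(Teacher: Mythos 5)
Correct, and essentially the paper's own argument: you use the same two Euler sequences from Theorem~\ref{thm.adam} (at indices $2$ and $3$), dualize, identify $\phi\otimes_A Ae_0$ with the dual of the multiplication surjection, and reduce the $e_1$-component to the second dualized sequence, exactly as in the paper's proof. The compatibility you flag as the ``main obstacle'' (that $1\otimes\eta_1$ followed by the cokernel projection of your $(\mathrm{I}^*)\otimes M$ agrees, up to canonical identifications, with the dual of multiplication) is precisely what the paper packages as the commutativity of diagram~(\ref{eqn.dualmodule}) with its canonical vertical isomorphism $\psi$, and the concluding check of the $A$-module structure on the cokernel is likewise the same.
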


\begin{proof}
In this case our map $\phi$ is
$$ \mbbS^*_{0,-i-2} \otimes (D_0 \ \ M) \oplus \mbbS^*_{1,-i-2} \otimes (0 \ \ D_1) \xrightarrow{\phi}
 \mbbS^*_{1,-i-2} \otimes M^* \otimes (D_0 \ \ M) .$$
We first establish that $\phi \otimes_A Ae_0$ is injective with cokernel isomorphic to $P_{i+2} e_0$.  By Theorem \ref{thm.adam}, the sequence induced by multiplication
$$
0 \rightarrow Q \otimes \mbbS_{2, -i-2} \rightarrow M \otimes \mbbS_{1, -i-2} \xrightarrow{\pi} \mbbS_{0, -i-2} \rightarrow 0
$$
is exact. By naturality of the isomorphism in Lemma~\ref{lemma.adjoints}, $\phi\otimes_A Ae_0 = \pi^*$ which is injective with cokernel $\mbbS^*_{2, -i-2} \cong \mbbS^*_{0, -i-4} = P_{i+2}e_0$.

Now we analyze $\phi\otimes_A Ae_1$. Consider the commutative diagram
\begin{equation}
\begin{CD}
  @.  M^{*} \otimes Q \otimes \mbbS_{2, -i-2} @>{\nu_1}>> M^{*} \otimes M \otimes \mbbS_{1, -i-2}
@>{\phi_a}>> M^{*} \otimes \mbbS_{0, -i-2} \\
@. @V{\psi}V{\cong}V @VV{\phi_b}V @. \\
Q \otimes \mbbS_{3, -i-2} @>{\nu_2}>> M^{*} \otimes \mbbS_{2, -i-2} @>>> \mbbS_{1, -i-2} @.
\end{CD}
\label{eqn.dualmodule}
\end{equation}
whose rows are induced by multiplication and whose verticals are canonical. By Theorem \ref{thm.adam} again, the rows are short exact sequences (with zeros on the end omitted).

This time $\phi\otimes_A Ae_1 = (\phi_a^* \ \ \phi_b^*)$. Dualizing the above commutative diagram and using the fact that $\psi$ is an isomorphism shows that $\phi\otimes_A Ae_1$ is injective with cokernel isomorphic to $(Q \otimes \mbbS_{3, -i-2})^{*} \cong \mbbS_{3, -i-2}^{*} \cong \mbbS_{1, -i-4}^{*} = P_{i+2}e_1$.

To complete the proof, we must show that $\operatorname{coker} \phi$ is isomorphic to $P_{i+2}$ as $A$-modules. This amounts to showing that the following diagram is commutative
$$\begin{CD}
\mbbS^*_{1,-i-2} \otimes M^* \otimes M @>{\operatorname{coker} (\phi\otimes_A Ae_0) \otimes M}>> \mbbS^*_{2,-i-2} \otimes M\\
@| @VVV \\
\mbbS^*_{1,-i-2} \otimes M^* \otimes M @>{\operatorname{coker} (\phi\otimes_A Ae_1)}>> \mbbS^*_{3,-i-2}
\end{CD}$$
However, in the notation of diagram (\ref{eqn.dualmodule}), we see that $\operatorname{coker} (\phi\otimes_A Ae_0) \otimes M = \nu_1^*$ whilst $\operatorname{coker} (\phi\otimes_A Ae_1)$ is given by $\nu_2^* (\psi^*)^{-1}\nu_1^*$ so we are done.

\end{proof}

%

 We define a sequence $\mathcal{L}_{i}$ in the bounded derived category of right $A$-modules by
$$
\mathcal{L}_{i} = \begin{cases} P_{i} \mbox{ if $i \geq -1$} \\ P_{i}[-1] \mbox{ if $i < -1$} \end{cases}
$$

\begin{cor} \label{cor:preproj}
In $D_{fg}^{b}(A)$, we have an isomorphism $ \mathcal{L}_{i} \otimes^L_A \omega^{-1}   \cong \mathcal{L}_{i+2}$ for all $i \in \mathbb{Z}$.
\end{cor}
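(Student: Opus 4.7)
The plan is to translate $\mathcal{L}_i \otimes^L_A \omega^{-1}$ into a computation of $\RHom_A(DA, P_i)$ via Proposition~\ref{prop:DAtilting}: that result gives $P_i \otimes^L_A \omega^{-1} \cong \RHom_A(\omega, P_i) = \RHom_A(DA, P_i)[1]$, and the right-hand side is computed by the two-term complex (\ref{eqn.complex}) coming from the bimodule resolution (\ref{eqn.king}). The corollary thus reduces to determining the kernel and cokernel of $\phi$ in (\ref{eqn.complex}), which splits naturally across three ranges of $i$: $i \geq -1$, $i \leq -4$, and the boundary cases $i \in \{-2, -3\}$.

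The two outer ranges require nothing beyond the earlier propositions. For $i \geq -1$, Proposition~\ref{prop.quasiiso} shows $\phi$ is injective with cokernel $P_{i+2}$, so $\RHom_A(DA, P_i) \cong P_{i+2}[-1]$ and hence $P_i \otimes^L_A \omega^{-1} \cong P_{i+2}$; since both $i$ and $i+2$ are $\geq -1$, no shift enters $\mathcal{L}_i$ or $\mathcal{L}_{i+2}$ and the formula is immediate. For $i \leq -4$, Proposition~\ref{prop.preinj} yields the same conclusion, and since both $i$ and $i+2$ are $\leq -2$, the $[-1]$ shifts appearing in $\mathcal{L}_i$ and $\mathcal{L}_{i+2}$ cancel on the two sides.

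The boundary cases $i \in \{-2, -3\}$ require direct analysis of (\ref{eqn.complex}), since neither Euler sequence of Theorem~\ref{thm.adam} fits. Here $P_{-2} = (D_0 \ \ 0)$ and $P_{-3} = (M^* \ \ D_1)$ are precisely the indecomposable injective summands $I_0$ and $I_1$ of $DA$. For $i = -2$: $P_{-2}e_1 = 0$ annihilates the target of $\phi$ and collapses the source to $e_0 A$ concentrated in degree $0$, so $P_{-2} \otimes^L_A \omega^{-1} \cong P_0[1]$ and $\mathcal{L}_{-2} \otimes^L_A \omega^{-1} \cong P_0 = \mathcal{L}_0$. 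For $i = -3$: one unwinds (\ref{eqn.complex}) using that the $M$-action on $P_{-3}$ is the evaluation pairing $M^* \otimes M \to D_1$. A short calculation in the spirit of Proposition~\ref{prop.quasiiso} identifies $\phi_0$ as the identity on $M^*$ and $\phi_1\colon (M^* \otimes M) \oplus D_1 \to M^* \otimes M$ as the map $(a, b) \mapsto a - \eta_{-1}(b)$, with $\eta_{-1}\colon D_1 \to M^* \otimes M$ the adjunction unit. This $\phi$ is surjective with kernel isomorphic to $e_1 A$, whence $P_{-3} \otimes^L_A \omega^{-1} \cong P_{-1}[1]$ and $\mathcal{L}_{-3} \otimes^L_A \omega^{-1} \cong P_{-1} = \mathcal{L}_{-1}$.

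The main obstacle is the case $i = -3$: one must identify the two components of $\phi_1$ explicitly via the evaluation pairing and the adjunction unit, and then verify that the resulting map is surjective with kernel exactly $e_1 A$. An alternative that sidesteps this direct computation is to invoke Lemma~\ref{lem.regular}(1), which forces $I_0 \otimes^L_A \omega^{-1}$ and $I_1 \otimes^L_A \omega^{-1}$ to each be shifted indecomposable projectives; combined with $DA \otimes^L_A \omega^{-1} \cong A[1]$ and the $\Hom$-space identities $\Hom_A(I_1, I_0) = M = \Hom_A(e_1 A, e_0 A)$ together with $\Hom_A(I_0, I_1) = 0 = \Hom_A(e_0 A, e_1 A)$, this pins down the correspondence $I_j \leftrightarrow e_j A$ for $j = 0, 1$.
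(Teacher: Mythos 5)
Your argument is correct, and for the generic ranges it coincides with the paper: both proofs simply quote Proposition~\ref{prop.quasiiso} (for $i\geq -1$) and Proposition~\ref{prop.preinj} (for $i\leq -4$), with the shift bookkeeping $P_i\otimes^L_A\omega^{-1}\cong\operatorname{RHom}_A(DA,P_i)[1]$ handled as you describe. Where you diverge is in the boundary cases $i=-2,-3$. You attack them "backwards", computing $\operatorname{RHom}_A(DA,P_i)$ from the complex (\ref{eqn.complex}); this is fine (for $i=-2$ the target vanishes and the source is $e_0A$, and for $i=-3$ your identification of $\phi_1$ as $(a,b)\mapsto a-\eta(b)$ with kernel $\cong e_1A$ and zero cokernel is what a direct unwinding gives), but it is more work than necessary, and you rightly flag the $i=-3$ computation as the delicate point. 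The paper instead goes "forwards": since $\mathcal{L}_0=e_0A$ and $\mathcal{L}_{-1}=e_1A$ are projective, one has in one line
$$ \mathcal{L}_{i+2}\otimes^L_A\omega \;=\; e_{-i-2}A\otimes^L_A DA[-1]\;=\;e_{-i-2}DA[-1]\;=\;P_i[-1]\;=\;\mathcal{L}_i \qquad (i=-2,-3),$$
and then applies the inverse of the equivalence $-\otimes^L_A\omega$ from Proposition~\ref{prop:DAtilting}. This completely sidesteps the evaluation/coevaluation analysis you carry out. Your fallback argument (using $DA\otimes^L_A\omega^{-1}\cong A[1]$, Krull--Schmidt, and the asymmetry $\Hom_A(I_1,I_0)\cong M\neq 0=\Hom_A(I_0,I_1)$ to pin down $I_j\otimes^L_A\omega^{-1}\cong e_jA[1]$) is also valid and closer in spirit to the paper's shortcut, though the paper's version is still simpler since it never needs to distinguish the two summands indirectly.
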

\begin{proof}
Propositions~\ref{prop.quasiiso} and \ref{prop.preinj} cover all cases except $i = -2,-3$ when we have
$$ \mathcal{L}_{i+2} \otimes_A^L \omega = e_{-i-2}A \otimes_A^L DA[-1]  =  e_{-i-2} DA[-1] = P_i [-1] = \mathcal{L}_i.$$
\end{proof}

\section{Beilinson equivalence and consequences}

In this section, we establish the main results of this paper, a version of Beilinson's derived equivalence, coherence of the noncommutative symmetric algebra and a version of Grothendieck's splitting theorem.

We will invoke (a mild generalization of) Polishchuk's theorem \cite[Proposition~2.3, Theorem~2.4]{polish} below. Let ${\sf C}$ be an abelian category and $\{L_i\}_{i \in \mathbb{Z}}$ a sequence of objects in ${\sf C}$ such that $D_i:= \End L_i$ is a right noetherian ring and $\Hom_{\sf C}(L_i, M)$ is a finitely generated $D_i$-module for every $M \in {\sf C}$.  We say that $\{L_i\}$ is {\em ample} if
\begin{itemize}
\item for every surjection $f \colon M \rto N$, the map $\Hom_{\sf C}(L_i, f)$ is surjective for $i \ll 0$ and,
\item for every $M \in {\sf C}, m \in \mathbb{Z}$, there exists a surjection of the form
$$ \oplus_{j = 1}^s L_{i_j} \lrto M $$
for some $i_j < m$.
\end{itemize}

\begin{theorem}  \label{thm.polish}
Let $\{L_i\}_{i \in \mathbb{Z}}$ be an ample sequence of objects in ${\sf C}$. Then the $\mathbb{Z}$-indexed algebra
$$ \mathbb{E} = \oplus_{i,j} \Hom_{\sf C}(L_{-j},L_{-i})$$
is coherent and ${\sf C} \equiv {\sf cohproj}\, \mathbb{E}$.
\end{theorem}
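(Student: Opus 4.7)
The plan is to mimic Polishchuk's original argument \cite{polish}, with only cosmetic modifications needed to handle the $\mathbb{Z}$-indexed (rather than $\mathbb{Z}$-graded) setting, since the proof there never really needed the objects $L_i$ to be shifts of a single generator.

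The central construction is the functor
$$
\Gamma \colon {\sf C} \lrto {\sf GrMod}\,\mathbb{E}, \qquad \Gamma(M) = \bigoplus_{i\in\mathbb{Z}} \Hom_{\sf C}(L_{-i},M),
$$
with $\mathbb{E}$-action given by composition. My first task would be to check that, for every $M \in {\sf C}$ and every $m \in \mathbb{Z}$, the truncation $\Gamma(M)_{\geq m}$ is a finitely generated $\mathbb{E}$-module. The second ampleness condition produces a surjection $q\colon \bigoplus_{j=1}^s L_{i_j} \to M$ with $i_j < -m$; applying the first ampleness condition to $q$ shows that for $i$ sufficiently negative, every element of $\Hom_{\sf C}(L_{-i},M)$ lifts through $q$, so $\Gamma(M)_{\geq N}$ is generated by $\{q \circ (-)\}$ for $N$ sufficiently large. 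A short descending induction on the finite interval $[m,N)$ (using the hypothesis that each $\Hom_{\sf C}(L_{-i},M)$ is a finitely generated $D_i$-module) then upgrades this to finite generation of $\Gamma(M)_{\geq m}$.

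Next I would establish coherence of $\mathbb{E}$. The cleanest route is to show that the kernel of any morphism $f \colon \bigoplus_b \mathbb{E}(-j_b)_{\geq m} \to \bigoplus_a \mathbb{E}(-i_a)_{\geq m}$ between finitely generated graded-projective $\mathbb{E}$-modules is itself finitely generated in every truncation. Such an $f$ corresponds, by Yoneda, to a morphism $g\colon \bigoplus_a L_{i_a} \to \bigoplus_b L_{j_b}$ in ${\sf C}$; its image $I := \im g \in {\sf C}$ admits, by the step above, a graded module $\Gamma(I)$ with the required finite-generation property. Chasing $\Gamma$ across the exact sequence $0 \to \ker g \to \bigoplus L_{i_a} \to I \to 0$, and using that $\Gamma$ is left exact while becoming exact after truncation (this is exactly the first ampleness axiom), identifies $\ker f$ (modulo torsion and up to a high truncation) with $\Gamma(\ker g)$, which is finitely generated. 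This delivers coherence.

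Finally, to produce the equivalence ${\sf C} \equiv {\sf cohproj}\,\mathbb{E}$, I would construct a quasi-inverse $\pi^{*}$ to $\pi \circ \Gamma$ as follows: a coherent $\mathbb{E}$-module $N$ admits a finite presentation $\bigoplus_a \mathbb{E}(-i_a)_{\geq m} \to \bigoplus_b \mathbb{E}(-j_b)_{\geq m} \to N \to 0$; send $N$ to the cokernel in ${\sf C}$ of the corresponding morphism $\bigoplus_a L_{i_a} \to \bigoplus_b L_{j_b}$ supplied by Yoneda. The isomorphism $\pi^{*} \circ \pi\Gamma \simeq \operatorname{id}_{\sf C}$ follows because applying $\Gamma$ to a presentation of $M$ by sums of $L_{i}$ and then passing to the quotient by torsion recovers (by ampleness) a presentation of $\pi\Gamma(M)$. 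In the other direction, $\pi\Gamma \circ \pi^{*} \simeq \operatorname{id}$ follows from the right exactness of $\pi\Gamma \circ \pi^{*}$ together with the fact that $\pi\Gamma(L_i) \cong \pi(\mathbb{E}(-i))$.

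The main obstacle, I expect, is not any single step but the careful bookkeeping of torsion: one must keep track of when a statement holds on the nose, when it holds only after applying $\pi$, and when it holds only in sufficiently high degree. In Polishchuk's original this is handled via a single shift functor; in the $\mathbb{Z}$-indexed setup one must replace ``shift by $n$'' with ``truncation above $n$'' uniformly throughout, which requires slightly more care but no essentially new ideas.
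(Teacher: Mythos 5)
Your outline is essentially Polishchuk's own proof of \cite[Proposition~2.3, Theorem~2.4]{polish} transplanted to the $\mathbb{Z}$-indexed setting, which is exactly how the paper treats this statement: it does not rewrite the argument but cites \cite{polish} and observes (as the remark after the theorem says) that the same proof works once Hom-finiteness is relaxed to the stated hypotheses that each $D_i=\End L_i$ is right noetherian and each $\Hom_{\sf C}(L_i,M)$ is a finitely generated $D_i$-module. The only small caution in your sketch is that for coherence it is cleaner to take kernels of maps between honest finitely generated frees $\oplus_b \varepsilon_{j_b}\mathbb{E} \to \oplus_a \varepsilon_{i_a}\mathbb{E}$, since these (unlike maps between truncations, which only correspond to morphisms in ${\sf C}$ up to torsion) are literally matrices over $\mathbb{E}$ and hence come from a morphism $g$ in ${\sf C}$, after which left exactness of $\Gamma$ gives $\ker f=\Gamma(\ker g)$ and your step (1) finishes the argument.
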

\noindent
\textbf{Remark:} The original statement in \cite{polish}, has more restrictive hypotheses, namely, Hom-finiteness.  However, Polishchuk in \cite[Remark~2 to Theorem~2.4]{polish} conceded a generalization like the one above should hold, and indeed one readily verifies that it holds with the same proof.

We need to invoke Minamoto's theory of Fano algebras \cite{minamoto2}. To this end, we consider an artinian ring $A$ of finite global dimension and let $\sigma \in D^b_{fg}(A)$ be a two-sided tilting complex. Minamoto defines the following full subcategories of $D^b_{fg}(A)$.

\begin{align*}
D^{\sigma,\geq 0} & = \{ M \in D^b_{fg}(A) | M \otimes^L_A \sigma^n\in D^{\geq 0}(A), \text{for all } n \gg 0\} \\
D^{\sigma,\leq 0} & = \{ M \in D^b_{fg}(A) | M  \otimes^L_A \sigma^n\in D^{\leq 0}(A), \text{for all } n \gg 0\}
\end{align*}

We thank the referee for correcting a missing hypothesis in the next result.
\begin{theorem}  \label{thm.minamoto}
Suppose that $\sigma^n$ is a pure $A$-module for all $n \gg 0$ and that $H^i(\sigma) = 0$ for $i >0$. If $A$ is hereditary, then the pair $(D^{\sigma,\leq 0}, D^{\sigma,\geq 0})$ defines a $t$-structure on $D^b_{fg}(A)$. Its heart ${\sf H}$ contains the objects $\{\sigma^n\}$ and  the sequence $\{\sigma^n\}$ is ample in {\sf H}. Furthermore, $D^b({\sf H})$ is triangle equivalent to $D^b_{fg}(A)$ and the global dimension of {\sf H} is at most one.
\end{theorem}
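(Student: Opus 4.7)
The plan is to adapt Minamoto's proof of the corresponding statement for finite-dimensional Fano algebras in \cite{minamoto2}, checking that his arguments do not depend on finite-dimensionality of $A$ in an essential way but only on the artinian/hereditary hypothesis together with the tilting-theoretic features of $\sigma$. Throughout, I would use freely the autoequivalence $-\otimes^L_A \sigma \colon D^b_{fg}(A) \to D^b_{fg}(A)$, together with the fact that for $n \gg 0$, the purity hypothesis identifies $-\otimes^L_A\sigma^n$ with a shift-free functor into ${\sf mod}\, A$.

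First I would establish the t-structure. The shift compatibilities $D^{\sigma,\leq 0}[1] \subseteq D^{\sigma,\leq 0}$ and $D^{\sigma,\geq 0}[-1]\subseteq D^{\sigma,\geq 0}$ are immediate from the definitions. The orthogonality $\Hom_{D^b_{fg}(A)}(D^{\sigma,\leq 0}, D^{\sigma,\geq 1}) = 0$ reduces, via the autoequivalence $-\otimes^L_A \sigma^n$ for $n \gg 0$, to the analogous vanishing for the standard t-structure on $D^b_{fg}(A)$; this is where purity of $\sigma^n$ enters. For existence of truncation triangles, I would observe that for any $M \in D^b_{fg}(A)$, the object $M \otimes^L_A \sigma^n$ is eventually a pure module, so its standard truncations can be transported back by $-\otimes^L_A \sigma^{-n}$ and glued with the finitely many ``low'' cohomologies to produce the required triangle. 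This is essentially Minamoto's construction; the only new point is to verify that the pullback of truncations is independent of the choice of sufficiently large $n$, which is automatic from the tilting property.

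The heart ${\sf H}$ contains $\sigma^n$ because $\sigma^n \otimes^L_A \sigma^m = \sigma^{n+m}$ is a module for $m \gg 0$ by hypothesis, so lies in $D^{\geq 0}(A)\cap D^{\leq 0}(A)$. For ampleness I would exploit the identity
$$ \Hom_{\sf H}(\sigma^i, M) = \Hom_{D^b_{fg}(A)}(\sigma^i, M) \cong \Hom_{D^b_{fg}(A)}(A, M \otimes^L_A \sigma^{-i}) = H^0(M \otimes^L_A \sigma^{-i}) $$
for any $M \in {\sf H}$, valid because $\End\sigma = A$ as a two-sided tilting complex. For $i \ll 0$ the right-hand side is just the underlying module $M\otimes^L_A \sigma^{-i}$, so $\Hom_{\sf H}(\sigma^i, -)$ is exact in that range, giving the first ampleness axiom; and for the second, taking a surjection from a free $A$-module onto $M \otimes^L_A \sigma^{-i}$ and tensoring by $\sigma^i$ produces the required surjection from a direct sum of $\sigma^{i_j}$'s onto $M$. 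Hom-finiteness needed for Theorem \ref{thm.polish} follows from the same identification: $\End_{\sf H}(\sigma^i) \cong A$ is right noetherian since $A$ is artinian, and $\Hom_{\sf H}(\sigma^i, M)$ is a finitely generated $A$-module.

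The derived equivalence $D^b({\sf H}) \simeq D^b_{fg}(A)$ is then standard: the t-structure is bounded and its heart contains the tilting generator $A$ (after an appropriate shift), so the inclusion ${\sf H} \hookrightarrow D^b_{fg}(A)$ extends to a triangle equivalence out of $D^b({\sf H})$. The bound on $\operatorname{gl.dim}{\sf H}$ follows because for $M,N \in {\sf H}$, $\operatorname{Ext}^2_{\sf H}(M,N)$ embeds under this equivalence into $\Hom_{D^b_{fg}(A)}(M, N[2])$, which vanishes since $A$ is hereditary. The main obstacle will be the bookkeeping in the construction of the truncation functors, specifically ensuring that purity of $\sigma^n$ for $n \gg 0$ (rather than finite-dimensionality) really does suffice to transport standard truncations consistently; once that is in hand, the remaining claims follow from Minamoto's argument with only notational changes.
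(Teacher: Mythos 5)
Your overall strategy (redo Minamoto's arguments and check that finite-dimensionality is inessential) is reasonable in spirit, and indeed the paper's own proof simply cites \cite[Section~3]{minamoto2} after observing that finite-dimensionality is only used there to secure the Hom-finiteness hypotheses of Theorem~\ref{thm.polish}. But your reproof of the key point --- the existence of truncation triangles for $(D^{\sigma,\leq 0},D^{\sigma,\geq 0})$, i.e.\ the content of Minamoto's Theorem~3.15 --- rests on a false claim: it is not true that $M\otimes^L_A\sigma^n$ is eventually a pure module for every $M\in D^b_{fg}(A)$. The purity hypothesis concerns only $\sigma^n=A\otimes^L_A\sigma^n$, and it propagates to objects of the heart, not to arbitrary objects. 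Already in the intended application ($\sigma=\omega^{-1}$, $\omega=DA[-1]$), an indecomposable injective module $I$ (a summand of $DA$) satisfies $I\otimes^L_A\sigma\cong P[1]$ for a projective $P$ by Lemma~\ref{lem.regular}(1), hence $I\otimes^L_A\sigma^n\cong (P\otimes^L_A\sigma^{n-1})[1]$ sits in cohomological degree $-1$ for all $n\geq 1$ and is never pure.

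Relatedly, your assertion that the pulled-back standard truncations are ``independent of the choice of sufficiently large $n$ \dots automatic from the tilting property'' is not correct: $-\otimes^L_A\sigma^{-n}$ transports the standard $t$-structure to one whose heart is $({\sf mod}\,A)\otimes^L_A\sigma^{-n}$, whereas ${\sf H}$ is a genuine tilt of ${\sf mod}\,A$ --- it contains objects such as $(I\otimes^L_A\sigma^{-m})[-1]$ and excludes the preinjective modules themselves. For example, for $M=(I\otimes^L_A\sigma^{-n})[-1]$ the transported truncation at the twist $n$ places $M$ in $D^{\sigma,\geq 1}$, although $M$ lies in ${\sf H}\subseteq D^{\sigma,\leq 0}$; the transported triangle only agrees with the $\sigma$-truncation after $n$ passes a stabilization point depending on $M$, and proving that stabilization occurs (via Krull--Schmidt over the artinian ring $A$, hereditariness, and the trichotomy of Lemma~\ref{lem.regular} --- or by citing Minamoto's theorem and checking where finite-dimensionality enters, as the paper does) is exactly the missing content. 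A smaller overreach of the same kind occurs in your global-dimension bound: for $M,N\in{\sf H}$, $\Hom_{D^b_{fg}(A)}(M,N[2])$ does not vanish merely because $A$ is hereditary, since heart objects have cohomology in two adjacent degrees and a term $\operatorname{Ext}^1_A(H^0(M),H^1(N))$ survives; one must also use that $H^0(M)$ has no preinjective summands while $H^1(N)$ is preinjective, together with the corresponding $\operatorname{Ext}^1_A$-vanishing. The remaining parts of your sketch (shift axioms, orthogonality, $\sigma^n\in{\sf H}$, ampleness, Hom-finiteness for Polishchuk's theorem) are essentially sound.
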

\begin{proof}
This is merely a combination of several of the main results of \cite[Section~3]{minamoto2}. The statements there include an additional assumption that $A$ is a finite-dimensional algebra over some field. However, this hypothesis is only used to ensure that the Hom-finiteness hypotheses in Polishchuk's theorem above hold. As we have seen, this is superfluous.

In detail, \cite[Theorem~3.15]{minamoto2} ensures that $(D^{\sigma,\leq 0}, D^{\sigma,\geq 0})$ defines a $t$-structure on $D^b_{fg}(A)$. By definition and purity of $\sigma^n$, the $\sigma^n \in {\sf H}$. Ampleness follows from \cite[Lemma~3.5]{minamoto2} whilst the triangle equivalence is \cite[Theorem~3.7(1)]{minamoto2}. Finally, the bound on the global dimension is given by \cite[Corollary~3.13]{minamoto2}.
\end{proof}

We now apply the theory above to noncommutative symmetric algebras. Let $A = \begin{pmatrix} D_{0} & M \\ 0 & D_{1} \end{pmatrix}$ as in Section~\ref{sec:species} where $M$ is a bimodule with symmetric duals and whose left-right dimension $(m,n)$ satisfies $mn \geq 4$. We saw that $A$ is artinian and hereditary. Let $\{\mathcal{L}_i\in D^b_{fg}(A)\}$ be the sequence defined in the paragraph preceding Corollary~\ref{cor:preproj}. Let $\mbbS = \mbbS^{nc}(M)$.

\begin{lemma}  \label{lem.endo}
Consider the $\mathbb{Z}$-indexed algebra
$$ \mathbb{E} := \oplus_{i,j} \Hom_{D^b_{fg}(A)}(\mathcal{L}_{-j},\mathcal{L}_{-i}) .$$
There is a natural isomorphism $\mbbS \cong \mathbb{E}$.
\end{lemma}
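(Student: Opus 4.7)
The plan is to construct a natural algebra homomorphism $\Phi \colon \mbbS \to \mathbb{E}$ via left multiplication and show it is an isomorphism. Both sides are $2$-periodic: for $\mbbS$ by Remark~\ref{rem.2periodic}, and for $\mathbb{E}$ because $-\otimes^L_A \omega^{-1}$ is an autoequivalence (Proposition~\ref{prop:DAtilting}) sending $\mathcal{L}_i$ to $\mathcal{L}_{i+2}$ (Corollary~\ref{cor:preproj}), giving a composition-preserving isomorphism $\mathbb{E}_{ij} \cong \mathbb{E}_{i-2, j-2}$. Both periodicities ultimately descend from the fixed isomorphism $M \cong M^{**}$, so it suffices to construct $\Phi$ on the fundamental domain where $i, j \leq 1$ and extend by $2$-periodicity.

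For $(i, j)$ with $i \leq j \leq 1$, the $P_{-i}, P_{-j}$ are honest right $A$-modules of the form $(\mbbS_{i,0}\ \mbbS_{i,1})$ and $(\mbbS_{j,0}\ \mbbS_{j,1})$, and $\mathcal{L}_{-i} = P_{-i}$, $\mathcal{L}_{-j} = P_{-j}$. For $s \in \mbbS_{ij}$, left multiplication gives bimodule maps $\mbbS_{j,\alpha} \to \mbbS_{i,\alpha}$ for $\alpha \in \{0, 1\}$; by associativity of multiplication in $\mbbS$, these assemble into an $A$-module homomorphism $\Phi_{ij}(s) \colon P_{-j} \to P_{-i}$, yielding a map $\Phi_{ij} \colon \mbbS_{ij} \to \Hom_{D^b_{fg}(A)}(\mathcal{L}_{-j}, \mathcal{L}_{-i}) = \mathbb{E}_{ij}$. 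This is manifestly a graded ring homomorphism because left multiplication is associative.

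To see $\Phi_{ij}$ is bijective, further reduce via $2$-periodicity to $j \in \{0, 1\}$, where $\mathcal{L}_{-j} = e_j A$ is projective. Then $\mathbb{E}_{ij} = \Hom_{D^b_{fg}(A)}(e_j A, \mathcal{L}_{-i}) = H^0(\mathcal{L}_{-i}) e_j$, which equals $P_{-i} e_j = \mbbS_{ij}$ when $i \leq 1$ (and equals $0 = \mbbS_{ij}$ by positivity when $i \geq 2$, since then $\mathcal{L}_{-i} = P_{-i}[-1]$ has $H^0 = 0$). Under these identifications $\Phi_{ij}$ sends $s$ to the map $e_j \mapsto s$, i.e., the tautological bijection. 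The main obstacle is verifying that the $2$-periodicity isomorphism of $\mathbb{E}$ coming from $-\otimes^L_A \omega^{-1}$ matches the $2$-periodicity of $\mbbS$ under $\Phi$, so that $\Phi$ is well-defined independent of the shift used; this requires tracing through how $\omega = (DA)[-1]$ is built from the identification $M \cong M^{**}$, which is the same datum used to define $\mbbS$'s $2$-periodic structure.
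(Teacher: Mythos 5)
Your construction of $\Phi$ by left multiplication on the range $i,j\leq 1$ and the tautological computation of $\Hom_{D^b_{fg}(A)}(e_jA,\mathcal{L}_{-i})$ for $j\in\{0,1\}$ are fine, but the proof has a genuine gap exactly where you flag ``the main obstacle'': everything outside the column $j\in\{0,1\}$ (and likewise the extension of $\Phi$ beyond the fundamental domain) rests on the claim that the $2$-periodicity isomorphism of $\mathbb{E}$ coming from $-\otimes^L_A\omega^{-1}$ and Corollary~\ref{cor:preproj} intertwines, under $\Phi$, with the identity identification $\mbbS_{ij}=\mbbS_{i+2,j+2}$. You state that this ``requires tracing through'' the role of $M\cong M^{**}$, but you never do the tracing. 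This is not a formality: the identifications $\mathcal{L}_i\otimes^L_A\omega^{-1}\cong\mathcal{L}_{i+2}$ are produced by the explicit cokernel computations of Propositions~\ref{prop.quasiiso} and~\ref{prop.preinj} (via the complex (\ref{eqn.complex}) and the Euler sequences), and one must check that those cokernel identifications are natural with respect to left multiplication by elements of $\mbbS_{ij}$, i.e.\ that the square comparing $\Phi_{ij}(s)\otimes^L\omega^{-1}$ with $\Phi_{i-2,j-2}(s)$ commutes. Without that verification your reduction to $j\in\{0,1\}$, and even the well-definedness of $\Phi$ as a map on all of $\mathbb{E}$'s fundamental domain overlaps, is unsupported; the unverified compatibility is essentially the substantive content of the lemma.

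By contrast, the paper does not route the core computation through periodicity at all. For all $i\leq j\leq 1$ it shows directly that every $A$-module map $P_{-j}\to P_{-i}$ is left multiplication by a unique element of $\mbbS_{ij}$: one extends such a map degree by degree to a graded $\mbbS$-module map $(\varepsilon_j\mbbS)_{\geq 0}\to\varepsilon_i\mbbS$ using the Euler exact sequences of Theorem~\ref{thm.adam}, and then identifies $\Hom_{\mbbS}((\varepsilon_j\mbbS)_{\geq 0},\varepsilon_i\mbbS)$ with $\mbbS_{ij}$ via the vanishing of $\operatorname{Ext}^p_{\mbbS}(\varepsilon_j\mbbS/(\varepsilon_j\mbbS)_{\geq 0},\varepsilon_i\mbbS)$ for $p=0,1$ from \cite{abstractp1}; only the passage from $\mbbS^{\leq 1}\cong\mathbb{E}^{\leq 1}$ to general $l$ uses $2$-periodicity. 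If you want to keep your slicker reduction, you must either carry out the naturality check on Propositions~\ref{prop.quasiiso}--\ref{prop.preinj} (a diagram chase showing the Euler-sequence cokernel identifications commute with left multiplication), or replace that step by an argument of the paper's type.
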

\begin{proof}
It suffices to show that we have compatible isomorphisms of  $\mathbb{Z}_{\leq l}$-indexed algebras
$$ \mbbS^{\leq l}:= \bigoplus_{i,j \leq l} \mbbS_{ij} \cong \bigoplus_{i,j\leq l} \Hom_{D^b_{fg}(A)}(\mathcal{L}_{-j},\mathcal{L}_{-i})=: \mathbb{E}^{\leq l} $$
for all $l$. Note first that
$$ \bigoplus_{i \leq 1} \mathcal{L}_{-i} = \left(\bigoplus_{j \geq -1} \mbbS_{-j0} \quad \bigoplus_{j \geq -1} \mbbS_{-j1}\right)$$
is naturally a $\mbbS^{\leq 1}-A$-bimodule so there is a natural algebra morphism
$$  \mbbS^{\leq 1} \lrto \bigoplus_{i,j\leq 1} \Hom_{D^b_{fg}(A)}(\mathcal{L}_{-j},\mathcal{L}_{-i}), $$
which we claim is an isomorphism.  Since this morphism sends $x \in \mathbb{S}_{ij}$ to left-multiplication by $x$, in order to prove the claim we must show every element of $\operatorname{Hom}_{A}(P_{-j},P_{-i})$ is induced by left-multiplication by a unique element of $\mathbb{S}_{ij}$. We first show that every element $\phi \in \operatorname{Hom}_{A}(P_{-j},P_{-i})$ extends uniquely to an element $\tilde{\phi} \in \operatorname{Hom}_{\mathbb{S}}((\varepsilon_{j}\mathbb{S})_{\geq 0}, \varepsilon_{i}\mathbb{S})$. To do so, we construct $\phi_n \colon \mathbb{S}_{jn} \rightarrow \mathbb{S}_{in}$ inductively, the case $n=0,1$ being the components of $\phi$. Consider the commutative diagram below, whose rows are exact by Theorem~\ref{thm.adam}.
\begin{equation*}
\begin{CD}
0 @>>> \mathbb{S}_{jn} \otimes Q_{n} @>>> \mathbb{S}_{j,n+1} \otimes M^{n+1*} @>>> \mathbb{S}_{j,n+2} @>>> 0  \\
@. @V{\phi_n}VV @V{\phi_{n+1}\otimes 1}VV @V{\phi_{n+2}}VV @. \\
0 @>>> \mathbb{S}_{in} \otimes Q_{n} @>>> \mathbb{S}_{i,n+1} \otimes M^{n+1*} @>>> \mathbb{S}_{i,n+2} @>>> 0
\end{CD}
\end{equation*}
Commutativity of the right-hand square defines $\phi_{n+2}$ given $\phi_n, \phi_{n+1}$ and furthermore, by construction, the resulting morphism $\tilde{\phi}$ is compatible with right mutliplication by $\mathbb{S}$.

Consider now the induced morphism 
$$
\Psi \colon \mathbb{S}_{ij} \simeq \operatorname{Hom}_{\mathbb{S}}(\varepsilon_{j}\mathbb{S}, \varepsilon_{i}\mathbb{S}) \rightarrow \operatorname{Hom}_{\mathbb{S}}((\varepsilon_{j}\mathbb{S})_{\geq 0}, \varepsilon_{i}\mathbb{S}).
$$ 
We know from \cite[Theorem 7.1 and Lemma 6.5]{abstractp1} that $\operatorname{Ext}^p_{\mathbb{S}}(\varepsilon_j\mathbb{S}/(\varepsilon_j\mathbb{S})_{\geq 0},\varepsilon_i \mathbb{S}) = 0$ for $p=0,1 $. The long exact sequence then shows that $\Psi$ is an isomorphism and the claim follows.

As noted in Remark~\ref{rem.2periodic}, the $\mathbb{Z}$-indexed algebra $\mbbS$ is 2-periodic whilst Corollary~\ref{cor:preproj} ensures that $\mathbb{E}$ is also 2-periodic, so by induction $\mbbS^{\leq l} \cong \mathbb{E}^{\leq l}$ for all $l$.
\end{proof}

\begin{theorem}  \label{thm.main}
Consider a $D_0-D_1$-bimodule $M$ with symmetric duals and whose left-right dimension $(m,n)$ satisfies $mn \geq 4$. Let $\mbbS = \mbbS^{nc}(M)$ be the corresponding noncommutative symmetric algebra.
\begin{enumerate}
\item The $\mathbb{Z}$-indexed algebra $\mbbS$ is coherent.
\item There is a triangle equivalence $D^b_{fg}({\sf cohproj}\,\mbbS) \cong D^b_{fg}(A)$ where the projective $\varepsilon_i \mbbS$ corresponds to $\mathcal{L}_i$.
\item The category ${\sf cohproj}\, \mbbS$ is hereditary.
\end{enumerate}
\end{theorem}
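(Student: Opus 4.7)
The plan is to apply Minamoto's Theorem~\ref{thm.minamoto} with $\sigma := \omega^{-1}$, combine it with Polishchuk's Theorem~\ref{thm.polish}, and invoke Lemma~\ref{lem.endo} to recognize the relevant endomorphism algebra as $\mbbS$.

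First I would verify the hypotheses of Theorem~\ref{thm.minamoto} with $\sigma = \omega^{-1}$. Proposition~\ref{prop:DAtilting} supplies the two-sided tilting structure. Since $A$ is hereditary and $\omega = (DA)[-1]$ is concentrated in cohomological degree $1$, one has $\omega^{-1} \simeq \RHom_A(\omega,A) \simeq \RHom_A(DA,A)[1]$, which is concentrated in cohomological degrees $\leq 0$ (the $\RHom$ is concentrated in degrees $0,1$ because $A$ is hereditary); in particular $H^i(\sigma)=0$ for $i>0$. For purity of $\sigma^n$ when $n \gg 0$, note that as a right $A$-module $\sigma^n \simeq A \otimes^L_A \sigma^n \simeq \mathcal{L}_{2n}\oplus\mathcal{L}_{2n+1} = P_{2n}\oplus P_{2n+1}$ by Corollary~\ref{cor:preproj}, which is an honest module concentrated in degree $0$.

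Theorem~\ref{thm.minamoto} then produces a $t$-structure on $D^b_{fg}(A)$ whose heart ${\sf H}$ satisfies $D^b({\sf H}) \simeq D^b_{fg}(A)$, has global dimension at most $1$, and admits $\{\sigma^n\}$ as an ample sequence. Next I would verify that every $\mathcal{L}_i$ lies in ${\sf H}$: by Corollary~\ref{cor:preproj}, for $n \gg 0$ the object $\mathcal{L}_i \otimes^L_A \sigma^n \simeq \mathcal{L}_{i+2n} = P_{i+2n}$ is a module concentrated in cohomological degree $0$, so $\mathcal{L}_i \in D^{\sigma,\leq 0} \cap D^{\sigma,\geq 0} = {\sf H}$. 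Because $\sigma^n \simeq \mathcal{L}_{2n}\oplus\mathcal{L}_{2n+1}$ inside ${\sf H}$, ampleness of $\{\sigma^n\}_{n \in \mathbb{Z}}$ refines at once to ampleness of the sequence $\{\mathcal{L}_i\}_{i \in \mathbb{Z}}$.

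Finally, Lemma~\ref{lem.endo} identifies $\oplus_{i,j}\Hom_{\sf H}(\mathcal{L}_{-j},\mathcal{L}_{-i}) \cong \mbbS$, so Polishchuk's Theorem~\ref{thm.polish} applied to $\{\mathcal{L}_i\}$ in ${\sf H}$ supplies part~(1), the coherence of $\mbbS$, together with an equivalence ${\sf cohproj}\,\mbbS \simeq {\sf H}$ matching $\varepsilon_i \mbbS$ with the corresponding $\mathcal{L}_i$. Composing with $D^b({\sf H}) \simeq D^b_{fg}(A)$ from Theorem~\ref{thm.minamoto} delivers part~(2), and part~(3) is immediate from the bound on the global dimension of ${\sf H}$. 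The main technical hurdle is verifying Minamoto's purity hypothesis on $\sigma^n$; this is precisely where the explicit description of preprojective and preinjective objects from Section~\ref{sec:preproj} (built on the Euler exact sequences of Theorem~\ref{thm.adam}) becomes indispensable.
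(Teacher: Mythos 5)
Your proposal is correct and follows essentially the same route as the paper: apply Minamoto's Theorem~\ref{thm.minamoto} to $\sigma=\omega^{-1}$ (purity of $\sigma^n$ checked via Corollary~\ref{cor:preproj}), pass from ampleness of $\{\sigma^n\}$ to ampleness of $\{\mathcal{L}_i\}$ in the heart ${\sf H}$, and conclude with Polishchuk's Theorem~\ref{thm.polish} and Lemma~\ref{lem.endo}. The only blemish is a harmless index slip: since $A\cong\mathcal{L}_{-1}\oplus\mathcal{L}_0$, one has $\sigma^n\cong\mathcal{L}_{2n-1}\oplus\mathcal{L}_{2n}$ rather than $\mathcal{L}_{2n}\oplus\mathcal{L}_{2n+1}$, which is still a pure module for $n\geq 0$, so the argument is unaffected.
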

\begin{proof}
Note that $A \cong \mathcal{L}_{-1} \oplus \mathcal{L}_0$ so Corollary~\ref{cor:preproj} shows that $\omega^{-i} = \mathcal{L}_{2i-1} \oplus \mathcal{L}_{2i}$. For $i\geq 0$, this is always a pure module, so we may apply Theorem~\ref{thm.minamoto} to obtain an abelian subcategory {\sf H} of $D^b_{fg}(A)$, such that i) $\{\omega^{-i}\}$ is ample in {\sf H} ii) $D^b({\sf H}) \cong D^b_{fg}(A)$ and iii) {\sf H} has global dimension $\leq 1$.  The definition of ampleness immediately implies that $\{ \mathcal{L}_i\}$ is also an ample sequence in {\sf H} so Polishchuk's Theorem~\ref{thm.polish} together with Lemma \ref{lem.endo} yields part (1) and (2).  Part (3) now follows immediately from \ref{thm.minamoto}.
\end{proof}

The theory of coherent sheaves on $\mathbb{P} := {\sf cohproj}\, \mbbS$ can now easily be broached by examining the heart {\sf H} arising in the proof of Theorem~\ref{thm.main}. Note that {\sf H} contains the subcategory {\sf R} of regular modules defined in Section~\ref{sec:species}. Our point of view is that the corresponding subcategory {\sf T} of ${\sf cohproj}\,\mbbS$ are the  {\em torsion sheaves} on $\mathbb{P}$. Of course, the {\em torsion-free sheaves} corresponds to the additive subcategory {\sf F} generated by the $\varepsilon_i \mbbS$. The next result generalizes Grothendieck's splitting theorem and clarifies in what sense {\sf T} is like the subcategory of torsion coherent sheaves on $\mathbb{P}^1$.

\begin{corollary}  \label{cor.splitting}
With the above notation,
\begin{enumerate}
\item The indecomposable objects of ${\sf cohproj}\, \mbbS$ are the $\varepsilon_i \mbbS$ and the indecomposable objects of {\sf T}.
\item $({\sf T}, {\sf F})$ is a torsion pair in ${\sf cohproj}\, \mbbS$ i.e.
\begin{align*}{\sf T} & = {}^{\perp}{\sf F} := \{ \mathcal{N} \in {\sf cohproj}\, \mbbS | \Hom_{\mathbb{P}}(\mathcal{N}, {\sf F}) = 0\} \\
{\sf F} & = {\sf T}^{\perp} := \{ \mathcal{N} \in {\sf cohproj}\, \mbbS | \Hom_{\mathbb{P}}({\sf T}, \mathcal{N}) = 0\}
.\end{align*}
\item (Grothendieck splitting) In particular, {\sf F} is closed under extensions.
\item Every object in ${\sf cohproj}\, \mbbS$ is a direct sum of $\varepsilon_i \mbbS$ and its {\em torsion subsheaf}, that is, maximal subobject in {\sf T}.
\item Given an indecomposable $\mathcal{N} \in {\sf cohproj}\, \mbbS$, $\mathcal{N} \in {\sf T}$ if and only if the Hilbert function
$$
h_{\mathcal{N}} \colon i \mapsto \dim_{D_i}\operatorname{Hom}_{\mathbb{P}}(\varepsilon_{-i}\mathbb{S},\mathcal{N})-\dim_{D_i} \operatorname{Ext}^{1}_{\mathbb{P}}(\varepsilon_{-i}\mathbb{S},\mathcal{N})
$$
is non-negative.
\end{enumerate}
\end{corollary}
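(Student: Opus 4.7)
The plan is to exploit the derived equivalence Theorem~\ref{thm.main}(2) to transport everything into the heart ${\sf H} \subset D^b_{fg}(A)$, under which $\varepsilon_i \mbbS$ corresponds to $\mathcal{L}_i$, the subcategory ${\sf T}$ corresponds to the regular modules ${\sf R}$ of Definition~\ref{def.regular}, and ${\sf F}$ to the additive closure of $\{\mathcal{L}_i\}_{i\in\mathbb{Z}}$. The key structural claim from which parts (1)--(4) readily flow is that every object of ${\sf H}$ is a direct sum of indecomposables, each either a regular module or some $\mathcal{L}_k$.

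To establish this claim, recall that $A$ being hereditary implies every $X \in D^b_{fg}(A)$ splits as $\bigoplus_k H^k(X)[-k]$, and $A$ being artinian makes each $H^k(X)$ of finite length, hence a Krull--Schmidt sum of indecomposable modules; so $D^b_{fg}(A)$ itself is Krull--Schmidt. Lemma~\ref{lem.regular}(2) sorts the indecomposable $A$-modules into three families: regulars, preprojectives $P_k$ ($k\geq -1$), and preinjectives $P_k$ ($k\leq -2$). Checking the defining condition ``$Y \otimes^L_A \omega^{-n}$ lies in ${\sf mod}\, A$ for $n \gg 0$'' against each family, using Corollary~\ref{cor:preproj} to identify the $\omega$-translates, shows that the only cohomological shifts of these indecomposables landing in ${\sf H}$ are $R[0]$ for $R$ regular and the $\mathcal{L}_k$ for $k \in \mathbb{Z}$ (with preinjective $P_k$ entering as $\mathcal{L}_k = P_k[-1]$). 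Since ${\sf H}$ is closed under direct summands, every $X \in {\sf H}$ decomposes as $T \oplus F$ with $T \in {\sf T}$ and $F \in {\sf F}$, giving (1) and (4). Each $\mathcal{L}_k$ is a direct summand of some $\omega^n$ by Corollary~\ref{cor:preproj}, so Lemma~\ref{lem.regular}(3) yields $\Hom_{\mathbb{P}}({\sf T}, {\sf F}) = 0$; combined with the decomposition this delivers the two $\perp$-descriptions in (2) and the splitting in (3) by standard torsion-theoretic arguments.

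For (5), consider an indecomposable $\mathcal{N}$, corresponding via the equivalence either to a regular module $R$ or to some $\mathcal{L}_j$. If $\mathcal{N} = R$ is regular, for each $i$ choose a positive integer $n$ with $-i-2n \leq -2$; the auto-equivalence $-\otimes^L_A \omega^n$ from Proposition~\ref{prop:DAtilting} and regularity of $R$ (so that $R \otimes^L_A \omega^n$ is a module) give
\[
\operatorname{Ext}^1(\mathcal{L}_{-i}, R) \cong \operatorname{Hom}\bigl(P_{-i-2n}[-1], (R\otimes^L_A\omega^n)[1]\bigr) = \operatorname{Ext}^2_A\bigl(P_{-i-2n}, R\otimes^L_A \omega^n\bigr),
\]
which vanishes by heredity of $A$; hence $h_R(i) = \dim_{D_i}\Hom(\mathcal{L}_{-i}, R) \geq 0$. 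If $\mathcal{N} = \mathcal{L}_j$, then at $i = -j-2$ the first term $\mbbS_{-j,-j-2}$ is zero by positivity of $\mbbS$, while using $\mathcal{L}_{j+2} \cong \mathcal{L}_j \otimes^L_A \omega^{-1}$ and $\omega[1] = DA$ one obtains
\[
\operatorname{Ext}^1(\mathcal{L}_{j+2}, \mathcal{L}_j) \cong \operatorname{Hom}_A(\mathcal{L}_j, \mathcal{L}_j \otimes_A DA).
\]
Reducing by $-\otimes^L_A \omega^{-k}$ to $j \in \{-1, 0\}$ and the direct computation $\mathcal{L}_j \otimes_A DA \cong e_{\bar{j}} DA$ identifies this Hom-space with $D_j$, giving $h_{\mathcal{L}_j}(-j-2) = -1 < 0$.

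The main obstacle is the initial classification of indecomposables in ${\sf H}$: it is essentially bookkeeping, but requires a careful accounting of how Minamoto's $t$-structure interacts with Lemma~\ref{lem.regular}(2) and Corollary~\ref{cor:preproj}. Once that is secured, the remaining parts reduce either to standard torsion-pair manipulations or to the two Ext computations above.
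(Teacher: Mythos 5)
Your argument for parts (1)--(4) is essentially the paper's: transport everything to the heart ${\sf H}$, classify the indecomposables there via Lemma~\ref{lem.regular}(1),(2) together with Corollary~\ref{cor:preproj} (your explicit Krull--Schmidt bookkeeping in $D^b_{fg}(A)$ just spells out what the paper leaves implicit), and get the orthogonality from Lemma~\ref{lem.regular}(3) plus the fact that each $\mathcal{L}_k$ is a summand of some $\omega^{-n}$; parts (3) and (4) are then the same standard torsion-theoretic deductions. Where you genuinely diverge is part (5): the paper deduces it from part (2) together with the noncommutative Serre duality of Theorem~\ref{thm.Serre} (which rests on \cite[Corollary 7.5]{abstractp1}), so that $\operatorname{Ext}^1_{\mathbb{P}}(\varepsilon_{-i}\mbbS,\mathcal{N}) \cong {}^*\Hom_{\mathbb{P}}(\mathcal{N},\varepsilon_{-i+2}\mbbS)$ vanishes for torsion $\mathcal{N}$ and is nonzero for a suitable $i$ when $\mathcal{N}=\varepsilon_j\mbbS$. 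You instead stay inside $D^b_{fg}(A)$: twisting by the auto-equivalence $-\otimes^L_A\omega^n$ so that $\mathcal{L}_{-i}$ becomes a shifted module $P[-1]$ and using heredity ($\operatorname{Ext}^2_A=0$) to kill $\operatorname{Ext}^1$ against regular objects, and then computing $\operatorname{Ext}^1(\mathcal{L}_{j+2},\mathcal{L}_j)\cong\Hom_A(e_{\bar j}A, e_{\bar j}DA)\cong D_{\bar j}$ directly to force $h_{\varepsilon_j\mbbS}(-j-2)=-1$; both computations are correct (consistently using the convention $\varepsilon_i\mbbS\leftrightarrow\mathcal{L}_i$ of Theorem~\ref{thm.main}(2)). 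Your route buys independence from Theorem~\ref{thm.Serre} and its external input, at the cost of the two explicit calculations; the paper's route is shorter given that Serre duality is stated anyway and packages the vanishing/non-vanishing uniformly.
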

\begin{proof}
To prove parts~(1) and (2), it suffices to prove the analogous results about {\sf H}. Part~(1) follows from Lemma~\ref{lem.regular}~(1),(2). This together with Lemma~\ref{lem.regular}(3) gives part~(2). Part (3) follows from (2) and left exactness of $\operatorname{Hom}$.  Part (4) is now a standard result in torsion theory. Part~(5) follows from (2) and the classical Serre duality Theorem~\ref{thm.Serre} below.
\end{proof}
We remark here that wild behaviour means that {\sf T} is usually not closed under subobjects and the Hilbert functions of torsion sheaves are usually exponential.

\begin{theorem}  \label{thm.Serre}
For $\mathcal{M} \in {\sf cohproj}\, \mbbS$ and $p=0,1$, there is a natural isomorphism
$$
\operatorname{Ext}^{1-p}_{\mathbb{P}}(\varepsilon_{i} \mbbS,\mathcal{M}) \cong {}^{*}\operatorname{Ext}^{p}_{\mathbb{P}}(\mathcal{M}, \varepsilon_{i+2}\mbbS).
$$
\end{theorem}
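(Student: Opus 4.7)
The plan is to use the derived equivalence $D^b_{fg}({\sf cohproj}\, \mbbS) \simeq D^b_{fg}(A)$ from Theorem~\ref{thm.main} to translate the claim into a Serre-type duality for the hereditary artinian ring $A$, and then verify the latter on projective generators. Under this equivalence, $\varepsilon_j\mbbS$ corresponds to $\mathcal{L}_{-j}$, as read off from Lemma~\ref{lem.endo}'s identification $\mbbS_{ij}\cong\operatorname{Hom}(\mathcal{L}_{-j},\mathcal{L}_{-i})$. Write $\widetilde{\mathcal{M}}$ for the image of $\mathcal{M}$ in $D^b_{fg}(A)$. Using Corollary~\ref{cor:preproj} together with $\omega=DA[-1]$ to identify $\mathcal{L}_{-i-2}[p]\cong\mathcal{L}_{-i}\otimes^L_A DA[p-1]$, and setting $Y:=\widetilde{\mathcal{M}}[1-p]$, the claim reduces to
$$\operatorname{Hom}_{D^b(A)}(\mathcal{L}_{-i}, Y) \cong {}^*\operatorname{Hom}_{D^b(A)}(Y, \mathcal{L}_{-i}\otimes^L_A DA).$$

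The plan is to deduce this from the general Serre-type duality for $A$: for all $X, Y\in D^b_{fg}(A)$,
$$\operatorname{Hom}_{D^b(A)}(X, Y) \cong {}^*\operatorname{Hom}_{D^b(A)}(Y, X\otimes^L_A DA),$$
naturally in each variable. Both sides being contravariant cohomological functors of $X$, and $A = e_0 A \oplus e_1 A$ being a triangulated generator of $D^b_{fg}(A)$ by heredity, it suffices to verify the formula for $X = e_i A$ with $i=0,1$. In that case the left side is the right $D_i$-module $Ye_i$, while $e_i A\otimes^L_A DA = e_i\cdot DA$ equals $(D_0\ 0)$ (the simple injective) for $i=0$ and $(M^*\ D_1)$ (the injective envelope of $(0\ D_1)$) for $i=1$, by the explicit bimodule structure of $DA$ recalled before Lemma~\ref{lem.canonical}.

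The crux is then the identification $\operatorname{Hom}_A(Y, e_i\cdot DA) \cong (Ye_i)^*$ as left $D_i$-modules. For $i=0$ it is immediate, since any $A$-map $Y\to(D_0\ 0)$ factors through its restriction $Ye_0\to D_0$; for $i=1$, compatibility of an $A$-linear map $Y\to(M^*\ D_1)$ with the row-vector structure map $Ye_0\otimes M\to Ye_1$ and the evaluation pairing $M^*\otimes M\to D_1$ forces the $e_0$-component of the map to be recovered from the $e_1$-component $Ye_1\to D_1$, yielding $\operatorname{Hom}_A(Y,(M^*\ D_1))\cong(Ye_1)^*$. Applying ${}^*(-)$ and invoking the symmetric-duals hypothesis ($(Ye_i)^{**}\cong Ye_i$) recovers $Ye_i=\operatorname{Hom}_A(e_i A, Y)$, establishing Serre duality on the projective generators. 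A standard triangulated argument (taking projective resolutions of length at most $1$ and applying the five-lemma to the induced Hom triangles) extends the isomorphism to all of $D^b_{fg}(A)$, and naturality in $\mathcal{M}$ follows from naturality at each step.

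The main obstacle is the explicit computation $\operatorname{Hom}_A(Y, e_i\cdot DA)\cong(Ye_i)^*$, which is the non-$k$-linear analogue of the classical identification $\operatorname{Hom}_A(-,DA)\cong\operatorname{Hom}_k(-,k)$ for finite-dimensional $k$-algebras. Here the care needed is in tracking how the dualities ${}^*(-)$ and $(-)^*$ interchange the left and right $D_i$-module structures on the finite-dimensional components of $Y$; once this accounting is performed, the isomorphism and its naturality drop out.
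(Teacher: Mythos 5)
Your route is genuinely different from the paper's, and most of its computational content is sound: the translation through the derived equivalence, the identification $\mathcal{L}_{-i}\otimes^L_A\omega\cong\mathcal{L}_{-i-2}$ from Corollary~\ref{cor:preproj} (your indexing $\varepsilon_j\mbbS\leftrightarrow\mathcal{L}_{-j}$ is indeed the one forced by Lemma~\ref{lem.endo}), the identifications $e_0DA\cong(D_0\ \ 0)$ and $e_1DA\cong(M^*\ \ D_1)$, and the key isomorphism $\Hom_A(Y,e_iDA)\cong(Ye_i)^*$, natural in $Y$, with reflexivity available because finitely generated modules over the artinian ring $A$ have components of finite dimension over $D_0,D_1$. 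The genuine gap is in the extension step. The ``general Serre-type duality'' $\Hom_{D^b(A)}(X,Y)\cong{}^*\Hom_{D^b(A)}(Y,X\otimes^L_ADA)$ for \emph{all} $X,Y\in D^b_{fg}(A)$ is not even well formed in this setting: ${}^*$ is a dual over a division ring, and for general $X$ the group $\Hom_{D^b(A)}(Y,X\otimes^L_ADA)$ carries no such module structure. There is no base field here, and the unavailability of a $k$-linear dual $\Hom_k(-,k)$ is exactly the difficulty the whole paper is organized around. Consequently your d\'evissage in $X$ (``both sides are contravariant cohomological functors of $X$, so check on the generator $e_0A\oplus e_1A$'') cannot be run as stated: the long exact sequences produced on the two sides are not sequences of modules over a fixed division ring, ${}^*$ is not an exact functor on them, and in any case one would need a natural transformation between the two sides \emph{before} comparing them on a generator, which you have not constructed.

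The gap is repairable, because the theorem only needs $X=\mathcal{L}_{-i}$, whose endomorphism ring is $D_i$, so for fixed $i$ both sides do make sense as right $D_i$-module-valued functors of $\mathcal{M}$. Instead of d\'evissage in the first variable, apply the auto-equivalence $-\otimes^L_A\omega^{n}$ of Proposition~\ref{prop:DAtilting}: by Corollary~\ref{cor:preproj} it carries $\mathcal{L}_{-i}$ to $\mathcal{L}_{-i-2n}$, so a suitable $n\in\mathbb{Z}$ reduces the statement for $X=\mathcal{L}_{-i}$ and all $Y$ to the case $X\in\{\mathcal{L}_0,\mathcal{L}_{-1}\}=\{e_0A,e_1A\}$ with $Y$ replaced by $Y\otimes^L_A\omega^{n}$, which is exactly where your explicit computation applies (heredity lets you replace the complex $Y$ by $H^0(Y)$ on both sides, since $e_iDA$ is injective); naturality in $Y$ then gives the naturality in $\mathcal{M}$ that the theorem asserts. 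With that substitution your argument closes and is a self-contained alternative to the paper's proof, which instead quotes \cite[Corollary 7.5]{abstractp1} for $\mathcal{M}=\varepsilon_j\mbbS$ and performs the (legitimate) d\'evissage in the second variable $\mathcal{M}$, using a finite presentation of $\mathcal{M}$ by the objects $\varepsilon_l\mbbS$ and heredity of ${\sf cohproj}\,\mbbS$; there the two sides genuinely are $D_i$-module-valued functors, so the five-lemma applies.
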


\begin{proof}
The proofs in case $p=0$ and $p=1$ are similar.  In each case, one first notes that when $\mathcal{M} = \varepsilon_{j}\mbbS$, there exists an isomorphism, natural with respect to morphisms between objects of the form $\varepsilon_{l}\mbbS$, by \cite[Corollary 7.5]{abstractp1}.  One then proves the result for arbitrary $\mathcal{M}$ by using the fact that ${\sf cohproj}\, \mbbS$ is hereditary and $\mathcal{M}$ has a finite presentation.
\end{proof}

\end{document}